\theoremstyle{plain}
\newtheorem{theorem}{Theorem}
\newtheorem{lemma}{Lemma}
\newtheorem{corollary}{Corollary}
\theoremstyle{definition}
\newtheorem{remark}{Remark}
\newtheorem{definition}{Definition}
\begin{document}
\selectlanguage{english}

\title{Privileged coordinates for Carnot--Carath\'eodory
spaces of low smoothness}
\author{S.\,G. Basalaev\thanks{
The publication is supported by the Ministry of
Education and Science of the Russian Federation
(Project number 1.3087.2017/{\selectlanguage{russian}ПЧ}) and by the
International Mathematical Center of
Novosibirsk State University.
}}
\date{}

\maketitle

\begin{abstract}
We describe classes of coordinate systems in
Carnot--Carath\'{e}odory spaces of low smoothness
which allow for homogeneous approximations of quasimetrics
and basis vector fields. We establish the minimal smoothness
required for these classes to coinside with the class of 
the privileged coordinates described earlier for the
smooth case. We also apply these results to prove partial
analogues of existing results in the canonical
coordinates of the 2nd kind. As a geometric tool we prove
some convergence theorems in quasimetric spaces.
\end{abstract}

\section{Introduction}

Consider $C^\infty$-smooth connected Riemannian
$N$-dimensional manifold $\mathbb M$ with fixed distribution
$H \subset T \mathbb{M}$ and a scalar product
$\langle \cdot , \cdot \rangle: H \times H \to \mathbb R$
on it.
Recall that a commutator (or Lie bracket) of two vector
fields $X$, $Y$ is a vector field $[X,Y] = XY - YX$.
Commuting vector fields in $H$ iteratively we obtain
a family of foliations $H_1 = H$, $H_{k+1} = H + [H_k, H]$.
It is well known \cite{Rashevsky, Chow} that if the 
distribution $H$ is totally nonholonomic (i.\,e. there is
such $m > 0$ that $H_m = T \mathbb{M}$) then any two points 
of $\mathbb{M}$ can be connected by a \emph{horizontal curve} 
i.\,e. by an absolutely continuous curve $\gamma$ such that
$\dot\gamma \in H$ a.\,e.
The metric $d_{cc}$ on $\mathbb{M}$ defined as the infimum
of the lengths of horizontal curves is called the 
\emph{Carnot--Carath\'{e}odory metric} and the corresponding
metric spaces are \emph{Carnot Carath\'{e}odory spaces} or 
\emph{sub-Riemannian spaces}
(note, that the precise definition of these terms may differ
depending on the source).

Important subclass of such spaces are
\emph{equiregular} Carnot--Carath\'{e}odory spaces,
i.\,e. such spaces that in a filtration
\begin{equation}
\label{Eq:Filtration}
  H = H_1 \subsetneq H_2 \subsetneq \dots \subsetneq H_m
  = T \mathbb{M}
\end{equation}
the dimensions of $H_k(x)$ do not depend on $x$
(that is $H_k$ are distributions on $\mathbb{M}$).
In \cite{RoSt76} it is shown that every
Carnot--Carath\'{e}odory space can be locally lifted
to the equiregular space of a higher dimension.

Since we focus on the local properties of the equiregular
spaces it is convenient to choose a basis in $T\mathbb{M}$ 
subordinate to the structure~\eqref{Eq:Filtration}.
This means that in a neighborhood of $p \in \mathbb{M}$
we take vector fields $X_1, \dots, X_N$ such that
\[
  H_k(x) = \mathrm{span} \{ X_1(x), \dots,
    X_{\dim H_k}(x) \}.
\]
To each vector field $X_j$ we assign a formal weight
$\sigma_j = \min \{ k : X_j \in H_k \}$.

The crucial tool in studying the local geometry of
Carnot--Carath\'{e}odory spaces are nilpotent approximations.
The methods of nilpotent approximation originate in the
works on hypoelliptic operators and are formulated using
\emph{canonical coordinates of the 1st kind}
\begin{equation}
\label{Eq:1KindCoord}
  \theta_p(x_1, \dots, x_N) =
  \exp ( x_1 X_1 + \ldots + x_N X_N )(p).
\end{equation}

We formulate the key statements in the following theorem
(the wordings may differ from the ones given by authors,
see comparison of different formulations after the theorem):

\begin{theorem}[on the nilpotent approximation]
\label{Prop:NilpApprox}
Let $\mathbb{M}$ be an equiregular Carnot--Carath\'{e}odory
space, $p \in \mathbb{M}$.
Using coordinates~\eqref{Eq:1KindCoord} define a family
of anisotropic dilatations
\[
  \Delta_\varepsilon : \theta_p(x_1, \dots, x_N)
  \mapsto
  \theta_p(\varepsilon^{\sigma_1} x_1, \dots,
  \varepsilon^{\sigma_N} x_N).
\]

\noindent
\emph{1. (Rothschild--Stein theorem on local 
approximation \cite{RoSt76, Metivier}).}
There are uniform in a neighborhood of~$p$ limits
\[
  \widehat X_k(x) = \lim_{\varepsilon \to 0} \,
  (\Delta_\varepsilon^{-1})_* \,
  \varepsilon^{\sigma_k} X_k( \Delta_\varepsilon x ),
\]
and the homogeneous vector fields
$\widehat X_1, \ldots, \widehat X_N$ form a basis of the
Lie algebra of some Carnot group $\mathbb{G}^p$
(nilpotent graded stratified Lie group).

\noindent
\emph{2. (Ball--Box theorem by Nagel--Stein--Wainger 
\cite{NSW}).}
In a neighborhood $U$ of $p$ one can define a distance
function
\[
  d_\infty(x, y) =
  \max_{i = 1, \dots, N} |u_i|^{\frac{1}{\sigma_i}},
  \quad \text{ if }
  y = \theta_x(u_1, \dots, u_N).
\]
Then there are constants $0 < C_1 \leq C_2 < \infty$ 
such that
\[
  C_1 \, d_\infty(x, y) \leq d_{cc}(x, y)
  \leq C_2 \, d_\infty(x, y)
\]
for all $x, y \in U$.

\noindent
\emph{3. (Gromov's local approximation theorem \cite{Gromov}).}
In a neighborhood of $p$ there is a uniform limit
\[
  \widehat d_{cc}(x, y) = \lim_{\varepsilon \to 0}
  \frac{1}{\varepsilon} d_{cc}
  (\Delta_\varepsilon x, \Delta_\varepsilon y)
\]
where $\widehat d_{cc}$ is the Carnot--Carath\'{e}odory 
metric of the group $\mathbb{G}^p$ (i.\,e. formed by the
nilpotentized vector fields
$\widehat X_1, \ldots, \widehat X_N$).
\end{theorem}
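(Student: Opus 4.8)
The plan is to obtain all three parts from a single mechanism: expand the frame fields $X_k$ in the coordinates~\eqref{Eq:1KindCoord} into components homogeneous with respect to $\Delta_\varepsilon$, isolate the leading (lowest-degree) component, and then transport this algebraic information first to the vector fields, then to the length functional, and finally to the metric.

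\textbf{Part 1.} I would write $X_k=\sum_j a_{kj}(x)\,\partial_{x_j}$ in the $\theta_p$-chart and expand each coefficient in a Taylor series, so that $(\Delta_\varepsilon^{-1})_*\,\varepsilon^{\sigma_k}X_k(\Delta_\varepsilon x)=\sum_{j,\alpha}\varepsilon^{\sigma_k-\sigma_j+\langle\sigma,\alpha\rangle}\,c_{kj\alpha}\,x^\alpha\partial_{x_j}$, where $\langle\sigma,\alpha\rangle=\sum_i\sigma_i\alpha_i$. The crucial lemma — proved by induction on $\sigma_k$ using the chain $H_{k+1}=H+[H_k,H]$ together with the fact that coordinates of the first kind are privileged — is that every monomial that actually occurs satisfies $\sigma_k-\sigma_j+\langle\sigma,\alpha\rangle\ge 0$, i.e.\ $X_k$ has order $\ge-\sigma_k$ at $p$. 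Hence the limit $\widehat X_k$ is exactly the sum of the degree-zero terms, the remaining terms carry strictly positive powers of $\varepsilon$, and the convergence is uniform on a compact neighborhood because the Taylor remainders are uniformly bounded there. Passing to brackets, $[\widehat X_i,\widehat X_j]$ is the degree $-(\sigma_i+\sigma_j)$ component of $[X_i,X_j]$, so the $\widehat X_k$ close up into a finite-dimensional Lie algebra graded by $\sigma$ and nilpotent of step $m$; equiregularity (constancy of $\dim H_k$) forces the graded pieces to have the right dimensions and the weight-one fields to generate, so the algebra is stratified and integrates to the Carnot group $\mathbb{G}^p$.

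\textbf{Part 2.} For the upper bound $d_{cc}\le C_2\,d_\infty$ I would join $x$ to $y=\theta_x(u)$ by concatenating, one direction at a time, horizontal pieces: moving along $X_i$ with $\sigma_i=1$ by $u_i$ costs length $\asymp|u_i|$; moving in a direction $X_i$ of weight $k\ge2$, which is a $k$-fold bracket of horizontal fields, is realized by a closed commutator polygon of horizontal arcs of length $\asymp|u_i|^{1/k}$ producing the required displacement up to lower-order errors that are absorbed into the later coordinates, with the Campbell--Hausdorff formula as the bookkeeping tool; summing gives length $\lesssim\sum_i|u_i|^{1/\sigma_i}$. For the lower bound $C_1\,d_\infty\le d_{cc}$ I would take a horizontal curve $\gamma$ of length $\ell$ and track the coordinates $u(t)$ with $\theta_x(u(t))=\gamma(t)$: since $\dot\gamma$ lies in $H$, the weighted coordinate $|u_i|^{1/\sigma_i}$ satisfies a differential inequality forcing, through a Gronwall-type argument, $d_\infty(x,\gamma(t))\lesssim\int_0^t\|\dot\gamma\|$. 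All constants are uniform on a fixed compact neighborhood since the structure functions and their iterated brackets are bounded there.

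\textbf{Part 3.} By changing variables $\gamma\mapsto\Delta_\varepsilon^{-1}\circ\gamma$ in the length functional and using homogeneity of the norm, one checks that $\tfrac1\varepsilon d_{cc}(\Delta_\varepsilon x,\Delta_\varepsilon y)$ is the Carnot--Carath\'eodory distance $d_{cc}^\varepsilon$ of the rescaled frame $X^\varepsilon_k=(\Delta_\varepsilon^{-1})_*\varepsilon^{\sigma_k}X_k(\Delta_\varepsilon\,\cdot\,)$. Part~1 gives $X^\varepsilon_k\to\widehat X_k$ uniformly on compacta, and the Ball--Box estimates of Part~2 apply to every $X^\varepsilon$ with constants independent of $\varepsilon$, because the structure functions of $X^\varepsilon$ are uniformly controlled. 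Hence $\{d_{cc}^\varepsilon\}$ is uniformly bounded and equicontinuous on compacta, so by Arzel\`a--Ascoli a subsequence converges uniformly to a length metric $d_*$; a short argument shows its minimizers are limits of $X^\varepsilon$-horizontal curves, hence $\widehat X$-horizontal, and conversely, so $d_*=\widehat d_{cc}$ regardless of the subsequence and the full limit exists. \textbf{The main obstacle} is the order lemma in Part~1 — that in coordinates of the first kind every component of $X_k$ has weighted order $\ge-\sigma_k$ — together with turning the pointwise expansion into a genuinely uniform limit; once this is in hand, Ball--Box and Gromov follow by careful but essentially routine estimates. In the low-regularity setting of the present paper the Taylor expansion underlying Part~1 is only finitely accurate, so the leading part $\widehat X_k$ and the remainder bounds must be isolated by hand at exactly the available order of smoothness — precisely the point the subsequent sections are designed to address.
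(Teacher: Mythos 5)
Your outline is, in substance, the standard classical proof and it is essentially sound, but it is worth being clear that the paper itself does not prove Theorem~\ref{Prop:NilpApprox} at all: it is quoted as background, with part~1 attributed to Rothschild--Stein and Metivier, part~2 to Nagel--Stein--Wainger, and part~3 to Gromov, and the paper's own work (Sections~\ref{Sec:Homogeneous}--\ref{Sec:2Kind}) concerns how such homogeneous approximations transfer to \emph{other} coordinate systems under low smoothness, recovering Bella\"iche's Theorem~\ref{Prop:BallBox} only as a byproduct in the $C^m$-smooth case. So your route --- weighted Taylor expansion of the frame in first-kind coordinates, the order lemma ``$X_k$ has weighted order $\ge -\sigma_k$ in privileged coordinates'' for part~1, commutator polygons plus a Gronwall argument for the two Ball--Box bounds, and a rescaled-frame/Arzel\`a--Ascoli compactness argument identifying the limit metric for part~3 --- is the Bella\"iche--Jean--Gromov proof rather than anything in this paper. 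Two cautions: first, your justification of the key order lemma is close to circular as written (``coordinates of the first kind are privileged'' is essentially the content of that lemma), and you correctly flag it as the main obstacle but do not actually prove it; second, in part~3 the claim that the Ball--Box constants of Part~2 hold for the rescaled frames $X^\varepsilon$ uniformly in $\varepsilon$, and the identification $d_*=\widehat d_{cc}$ via convergence of horizontal curves in both directions, each need a genuine argument (uniform control of the structure functions under dilation, plus lower and upper semicontinuity of the distances), which in the paper's low-regularity analogue is supplied instead by the curve-divergence estimate of Theorem~\ref{Th:CurveDivergence} quoted from \cite{KarmQuasi}. Your approach buys a self-contained proof in the $C^\infty$ setting where full Taylor expansions are available; the paper's approach deliberately avoids it, since exactly those expansions fail at the smoothness levels ($C^1$, $C^{1,\alpha}$, $C^{m-1,1}$) it studies, as you note at the end.
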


In the original work by L.\,P.~Rothschild and
E.\,M.~Stein~\cite{RoSt76} it is shown that in the special
case of \emph{free} vector fields there is a decomposition
\[
  X_k(x) = \widehat X_k(x) + R_k(x),
\]
where vector fields $\widehat X_k$ are homogeneous and
the remainders $R_k$ are small as we approach~$p$.
The convergence of the vector fields to the homogeneous ones
in a smooth equiregular setting in the form stated above
is proved by G.~Metivier~\cite{Metivier}.
In a recent paper~\cite{Bramanti} the convergence is shown
using special \emph{regularized 1st kind coordinates}
in the case when $H \in C^{m-1, \alpha}$, $\alpha > 0$
(see also the generalization to even lower smoothness below).

Comparison of Carnot--Carath\'eodory metric with the distance
$d_\infty$ and a number of other distance functions
is conducted in~\cite{NSW}. It is worth noting that the
quantity $d_\infty$ is not a metric in general but
only a local quasimetric, i.\,e.
\[
  d_\infty(x,z) \leq Q(d_\infty(x, y) + d_\infty(y, z) ),
\]
for all $x, y, z \in U$ and some $Q = Q(U) \geq 1$.
In some works Ball--Box theorem is formulated as
\[
  \mathrm{Box}(x, C_1 r)
  \subset B_{cc}(x, r)
  \subset \mathrm{Box} (x, C_2 r)
\]
where $B_{cc}$ is a ball in the metric $d_{cc}$
and $\mathrm{Box}$ is a ball in quasimetric $d_\infty$.

The local approximation theorem is stated by 
M.~Gromov~\cite{Gromov} for ``smooth enough vector fields''
in the form
\[
  |d_{cc}(x, y) - \widehat d_{cc}(x, y)| = o(\varepsilon)
  \quad \text{ as } x, y \in B_{cc}(p, \varepsilon)
  \text{ and } \varepsilon \to 0.
\]
This statement is equivalent to the one stated above since
the metric $d_{cc}$ is homogeneous
w.r.t. $\Delta_\varepsilon$.

In~\cite{Vod07, KarmVod} the notion of equiregular
Carnot--Carath\'{e}odory spaces was generalized to
$C^1$-smooth setting. Following these works we use the
following definition:

\begin{definition}
Connected $C^\infty$-smooth manifold $\mathbb{M}$
of topological dimension~$N$ is called
\emph{$C^{k,\alpha}$-smooth equiregular
Carnot--Carath\'{e}odory space}, $k \in \mathbb{N}$,
$\alpha \in [0,1]$ (we designate $C^{k,0} = C^k$)
if there is a fixed filtration of the tangent bundle
$T \mathbb{M}$ by $C^{k,\alpha}$-smooth distributions
\begin{equation}
\label{Eq:NonSmoothFiltration}
  H_1 \subsetneq H_2 \subsetneq \ldots
  \subsetneq H_m = T \mathbb{M}
\end{equation}
such that $[H_i, H_j] \subset H_{i+j}$
for all $i, j = 1, \dots, m$.

Carnot--Carath\'{e}odory space
$\mathbb{M}$ is called a \emph{Carnot manifold}
under a stronger assumption
$H_{k} = \mathrm{span} \{ H_{k-1}, [H_i, H_j] : i+j=k \}$,
$k = 2, \dots, m$.

We refer to the number $m$ as the \emph{depth}
of Carnot--Carath\'{e}odory space.
\end{definition}

Note, that a classical sub-Riemannian space with
$H \in C^{k+m-1,\alpha}$ is a $C^{k,\alpha}$-smooth
Carnot manifold. Carnot manifolds carry
Carnot--Carath\'{e}odory metric
(it is proved in \cite{KarmVod} for the smoothness class
$C^{1,\alpha}$, $\alpha \in (0, 1]$, 
and in \cite{BasVod} for $C^1$).
But generally it is not required for two points in a
Carnot--Carath\'{e}odory space with given definition
to be connectable by a horizontal curve. In this case the
local quasimetric $d_\infty$ is used to describe their 
geometric behavior. The properties of $C^1$-smooth
Carnot--Carath\'{e}odory spaces are stated in the following
theorem (compare with Theorem~\ref{Prop:NilpApprox}).

\begin{theorem}[\cite{KarmVod, Greshnov, KarmQuasi}]
\label{Th:KarmVod}
Let $C^1$-smooth vector fields $X_1, \ldots, X_N$
conform to the commutator table
\[
  [X_i, X_j](x) =
  \sum_{k : \: \sigma_k \leq \sigma_i + \sigma_j}
  c_{ijk}(x) X_k(x).
\]
Then

1. There is a family of vector fields
$\widehat X'_1, \ldots, \widehat X'_N$ in $\mathbb{R}^N$ 
such that \linebreak
$\exp (u_1 \widehat X'_1 + \ldots + u_N \widehat X'_N)(0)
= (u_1, \ldots, u_N)$ and
\[
  [\widehat X'_i, \widehat X'_j](u) =
  \sum_{k : \: \sigma_k = \sigma_i + \sigma_j}
  c_{ijk}(p) \widehat X'_k(u).
\]
The vector fields
$\widehat X'_1, \ldots, \widehat X'_N$
form a structure of the graded nilpotent Lie algebra
(and the structure the Carnot algebra for Carnot manifolds).

2. For vector fields
$\widehat X_k = (\theta_p)_* \widehat X'_k$ we have
\[
  \widehat X_k(x) = \lim_{\varepsilon \to 0}
  (\Delta^p_\varepsilon)^{-1}_* \varepsilon^{\sigma_k}
  X_k(\Delta^p_\varepsilon x)
\]
where the limit is uniform in a neighborhood of $p$.

3. Using the vector fields $\widehat X'_k$ construct a
quasimetric $\widehat d'_\infty$ the same way as $d_\infty$
is built and push-forward it to the manifold:
$\widehat d_\infty(x, y) =
\widehat d'_\infty(\theta_p^{-1}(x), \theta_p^{-1}(y))$
($\widehat d_\infty$ can not be defined in the same 
straightforward way using $\widehat X_k$ since these fields
are only continuous in general). Then
\[
  |d_\infty(x, y) - \widehat d_\infty(x,y)| = o(\varepsilon)
\]
as $x, y \in \mathrm{Box}(p, \varepsilon)$ and
$\varepsilon \to 0$ uniformly in a neighborhood of $p$.
\end{theorem}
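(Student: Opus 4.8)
\emph{Proof proposal.} The plan is to establish the three parts in order, with Part~2 as the analytic core: Part~1 is an algebraic preliminary whose only substantive point (the Jacobi identity) can in fact be read off from Part~2, and Part~3 will follow from Part~2 by a stability-of-ODE argument.

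\textbf{Part 1.} Set $\widehat c_{ijk} := c_{ijk}(p)$ whenever $\sigma_k = \sigma_i + \sigma_j$ and $\widehat c_{ijk} := 0$ otherwise; these are antisymmetric in $i,j$ and, by construction, respect the grading $\sigma_k \le m$, hence define a nilpotent bilinear bracket on $\mathbb{R}^N$. To see that it satisfies the Jacobi identity one may either expand the Jacobi identity for the $C^1$ fields $X_1,\dots,X_N$ via the commutator table and extract its top-weight component (using the canonical-coordinate identities below in place of a Taylor expansion, which is unavailable in $C^1$), or simply postpone this to Part~2, where the limit fields $\widehat X_k$ are produced as genuine vector fields satisfying the commutator table with constants $c_{ijk}(p)$ and therefore inherit Jacobi automatically. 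Having a graded nilpotent Lie algebra $\mathfrak g$, realize it as the Lie algebra of its simply connected group $\mathbb G^p$ written in exponential coordinates of the first kind: this yields vector fields $\widehat X'_1,\dots,\widehat X'_N$ on $\mathbb{R}^N$ with $\exp(u_1\widehat X'_1 + \dots + u_N\widehat X'_N)(0) = (u_1,\dots,u_N)$ by the very definition of such coordinates, and the grading of $\mathfrak g$ translates into homogeneity of $\widehat X'_k$ under $\Delta^p_\varepsilon$.

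\textbf{Part 2.} Write $X_k = \sum_i a_{ik}\,\partial_i$ in the coordinates $\theta_p$, so that $a_{ik}(0) = \delta_{ik}$, and put $X^\varepsilon_k := (\Delta^p_\varepsilon)^{-1}_*\varepsilon^{\sigma_k}X_k(\Delta^p_\varepsilon\,\cdot\,) = \sum_i a^\varepsilon_{ik}\,\partial_i$ with $a^\varepsilon_{ik}(x) = \varepsilon^{\sigma_k - \sigma_i}a_{ik}(\Delta^p_\varepsilon x)$. For $\sigma_i \le \sigma_k$ the prefactor is bounded and $a_{ik}(\Delta^p_\varepsilon x) \to a_{ik}(0)$, so these entries converge uniformly on compacta and already match the ``linear part'' of $\widehat X'_k$. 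The difficulty is the block $\sigma_i > \sigma_k$, an indeterminate form $\infty \cdot 0$: here one resolves it by induction on $\sigma_i - \sigma_k$, using the identities forced by canonical coordinates of the first kind (the radial identity $\sum_j x_j a_{ij}(x) = x_i$, i.e. $\sum_j x_j X_j = \sum_i x_i\partial_i$) together with the commutator table to express the higher-weight coefficients as ray-integrals of brackets of lower-weight ones — a non-smooth analogue of the Rothschild--Stein expansion. Since in the $C^1$ setting every bracket occurring in this recursion is continuous, the integral representations are legitimate and stable, yielding uniform bounds and equicontinuity of the $a^\varepsilon_{ik}$ on compacta; Arzel\`a--Ascoli then extracts a subsequential uniform limit $X^0_k$. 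One identifies $X^0_k = \widehat X_k$ as follows: from $[X^\varepsilon_i, X^\varepsilon_j] = \sum_k \varepsilon^{\sigma_i + \sigma_j - \sigma_k}c_{ijk}(\Delta^p_\varepsilon x)\,X^\varepsilon_k$ only the terms with $\sigma_k = \sigma_i + \sigma_j$ survive in the limit and $c_{ijk}(\Delta^p_\varepsilon x) \to c_{ijk}(p)$, so $X^0_k$ realizes the commutator table of $\widehat X'_k$; moreover $X^0_k(0) = \partial_k$, and $t \mapsto (\Delta^p_\varepsilon)^{-1}\theta_p(\Delta^p_\varepsilon(tx))$ is the $X^\varepsilon$-line through $0$ and converges, so the limit family is in canonical coordinates of the first kind. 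By uniqueness of such a family, $X^0_k = (\theta_p)_*\widehat X'_k = \widehat X_k$; independence of the subsequence gives convergence of the whole family, uniformly in a neighbourhood of $p$. This simultaneously furnishes the Jacobi identity left open in Part~1.

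\textbf{Part 3.} By homogeneity of $\widehat d'_\infty$ under $\Delta^p_\varepsilon$, the assertion $|d_\infty(x,y) - \widehat d_\infty(x,y)| = o(\varepsilon)$ for $x,y \in \mathrm{Box}(p,\varepsilon)$ is equivalent to $\tfrac1\varepsilon d_\infty(\Delta^p_\varepsilon x, \Delta^p_\varepsilon y) \to \widehat d'_\infty(x,y)$ uniformly for $x,y$ in a fixed box (and for $p$ ranging over a neighbourhood). Writing $\Delta^p_\varepsilon y = \theta_{\Delta^p_\varepsilon x}(w^\varepsilon)$ one has $d_\infty(\Delta^p_\varepsilon x, \Delta^p_\varepsilon y) = \max_i |w^\varepsilon_i|^{1/\sigma_i}$, hence $\tfrac1\varepsilon d_\infty(\Delta^p_\varepsilon x, \Delta^p_\varepsilon y) = \max_i |u^\varepsilon_i|^{1/\sigma_i}$ with $u^\varepsilon_i := \varepsilon^{-\sigma_i}w^\varepsilon_i$, and $u^\varepsilon$ is precisely the parameter for which the $X^\varepsilon$-exponential from $x$ reaches $y$, i.e. $u^\varepsilon$ solves the ODE system defining $\theta$ for the rescaled fields. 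Since $X^\varepsilon_k \to \widehat X_k$ uniformly by Part~2, and the limit fields — being push-forwards of polynomial fields — have globally defined flows depending continuously on initial data and parameters, continuous dependence of ODE solutions on the right-hand side gives $u^\varepsilon \to u$, where $y = \widehat\theta_x(u)$ is the exponential built from $\widehat X_k$; therefore $\tfrac1\varepsilon d_\infty(\Delta^p_\varepsilon x, \Delta^p_\varepsilon y) \to \max_i|u_i|^{1/\sigma_i} = \widehat d_\infty(\,\cdot\,,\cdot)$, uniformly because all estimates in Part~2 are uniform near $p$.

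\textbf{Expected main obstacle.} The genuinely hard step is the $\infty\cdot 0$ block in Part~2: extracting the higher-weight coefficients' limit behaviour without a Taylor expansion, by making the Rothschild--Stein-type ray-integral recursion close using only the continuity of brackets available in the $C^1$ category. A secondary subtlety in Part~3 is that the limit fields are merely continuous, so one must exploit their polynomial structure (unique, smoothly parametrized flows) rather than invoke a Lipschitz ODE theorem, and must track uniformity as the base point of the box varies.
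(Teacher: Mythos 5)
You should first note that the paper does not prove this statement at all: Theorem~\ref{Th:KarmVod} is quoted as a known result of \cite{KarmVod, Greshnov, KarmQuasi} and is used as a black box, so there is no internal proof to compare against; your proposal has to stand on its own, and as written it has genuine gaps precisely at the steps those references spend most of their effort on. The central one is in Part~2. For the block $\sigma_i > \sigma_k$ you assert that a ray-integral recursion based on $\sum_j x_j X_j = \sum_i x_i \partial_i$ and the commutator table yields uniform bounds and equicontinuity of $a^\varepsilon_{ik}$, but this is exactly the quantitative decay $a_{ik}(x) = O(\|x\|^{\sigma_i - \sigma_k})$ (with a modulus of continuity) that constitutes the theorem; in the $C^1$ category the recursion immediately produces iterated brackets whose coefficients are merely continuous, so the scheme does not close ``using only continuity of brackets'' without the careful flow/Gronwall-type estimates of \cite{KarmVod, Greshnov}, and your Arzel\`a--Ascoli step has no equicontinuity input until that is done. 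The identification step is also invalid as stated: you pass to the limit in $[X^\varepsilon_i, X^\varepsilon_j] = \sum_k \varepsilon^{\sigma_i+\sigma_j-\sigma_k} c_{ijk}(\Delta^p_\varepsilon x) X^\varepsilon_k$, but the left-hand side involves first derivatives of the coefficients $a^\varepsilon_{ik}$, and uniform ($C^0$) convergence of $X^\varepsilon_k$ gives no control of these derivatives, so the limit fields do not ``inherit'' the commutator table or the Jacobi identity automatically. This also makes the architecture circular: Part~1 defers Jacobi to Part~2, while Part~2 identifies the limit ``by uniqueness of such a family,'' i.e. by the homogeneous realization whose existence is Part~1.

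Part~3 is closer to workable: rescaling reduces the claim to convergence of the parameters $u^\varepsilon$ of the exponential map of the rescaled fields, and since the limit fields are polynomial in the coordinates $\theta_p$, uniqueness of the limit ODE plus uniform convergence of right-hand sides does give convergence of flows. But you need convergence of the \emph{inverse} exponential maps (uniform local injectivity/openness of the limit exponential and of the approximants), not just continuous dependence of solutions, and you must track uniformity in the base point; the route actually used downstream in this paper is the stronger quantitative curve-divergence estimate of \cite{KarmQuasi} (restated here as Theorem~\ref{Th:CurveDivergence}), which your argument would at best recover in a weaker, non-quantitative form. In short: the outline reproduces the classical smooth-case strategy, correctly flags the $\infty\cdot 0$ block as the obstacle, but the two load-bearing claims --- equicontinuity of the high-weight coefficients and passage to the limit in the bracket relations --- are asserted rather than proved, and in the $C^1$ setting neither is routine.
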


The convergence of the vector fields to the homogeneous ones
in the coordinates of the 1st kind is established for
$C^{1,\alpha}$-smooth equiregular spaces in~\cite{KarmVod}
and for $C^1$-smooth spaces in~\cite{Greshnov}.
The convergence of the quasimetric $d_\infty$ to the 
$C^1$-smooth equiregular spaces is shown in~\cite{KarmQuasi}. 
Note that for $C^1$-smooth spaces the coordinate system 
$\theta_p$ is also just $C^1$-smooth.

All aforementioned results rely on canonical coordinates
of the 1st kind. There are problems, however, for which
some other coordinate systems may be more suitable.
For instance, the canonical coordinates of the 2nd kind
\[
  (x_1, \ldots, x_N) \mapsto
  \exp (x_N X_N) \circ \ldots \circ
  \exp (x_1 X_1) (p)
\]
are heavily used in~\cite{BasVod}.

This leads us to the question: what conditions should we
impose on the coordinate system so that 
Theorem~\ref{Prop:NilpApprox} still holds?
In a paper~\cite{Bell} by A. Bella\"{\i}che such condition
is stated for smooth sub-Riemannian spaces.

\begin{theorem}
\label{Prop:BallBox}
The nilpotent approximation in the sense of
Theorem~\ref{Prop:NilpApprox} exists in coordinates~$\phi_p$
around the point $p$ if and only if
\[
  \phi_p \bigl( \mathrm{Box}(0, C_1 \varepsilon) \bigr)
  \subset
  B_{d_{cc}}(p, \varepsilon)
  \subset
  \phi_p \bigl( \mathrm{Box}(0, C_2 \varepsilon) \bigr)
\]
for some $0 < C_1 \leq C_2 < \infty$ and for all
$0 < \varepsilon \leq \varepsilon_0$. Here
$\mathrm{Box}(0, r) = \{ x \in \mathbb{R}^N :
|x_k|^{\sigma_k} \leq r \}$.
\end{theorem}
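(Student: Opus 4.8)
The plan is to show the two implications separately, using Theorem~\ref{Prop:NilpApprox} (specifically the Ball--Box theorem of part~2) as the bridge between the intrinsic metric $d_{cc}$ and the coordinate model. For the ``only if'' direction I would argue as follows. Suppose the nilpotent approximation exists in the coordinates $\phi_p$, meaning that with the dilatations $\Delta_\varepsilon$ written in the $\phi_p$-chart the rescaled vector fields $(\Delta_\varepsilon^{-1})_* \varepsilon^{\sigma_k} X_k(\Delta_\varepsilon \cdot)$ converge uniformly to a Carnot-group basis $\widehat X_k$, and the rescaled metric $\tfrac1\varepsilon d_{cc}(\Delta_\varepsilon\cdot,\Delta_\varepsilon\cdot)$ converges to the corresponding $\widehat d_{cc}$. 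The limit metric $\widehat d_{cc}$ on the Carnot group, being homogeneous of degree $1$ under $\Delta_\varepsilon$ and continuous, satisfies its own Ball--Box estimate: $\widehat B_{cc}(0,1)$ is a bounded open neighbourhood of $0$, so there are radii $0 < c_1 \le c_2 < \infty$ with $\mathrm{Box}(0,c_1) \subset \widehat B_{cc}(0,1) \subset \mathrm{Box}(0,c_2)$. Pulling this back through the uniform convergence of the rescaled metrics and using homogeneity of both $\widehat d_{cc}$ (exact) and $d_{cc}$ (under $\Delta_\varepsilon$ composed with $\phi_p$), I get, for all small $\varepsilon$, that $\phi_p(\mathrm{Box}(0,C_1\varepsilon)) \subset B_{d_{cc}}(p,\varepsilon) \subset \phi_p(\mathrm{Box}(0,C_2\varepsilon))$ after shrinking $c_1$ slightly and enlarging $c_2$ slightly to absorb the $o(1)$ error; here one uses that $\Delta_\varepsilon$ maps $\mathrm{Box}(0,r)$ to $\mathrm{Box}(0,\varepsilon r)$.

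For the ``if'' direction, assume the two-sided box inclusion holds. Introduce the canonical coordinates of the first kind $\theta_p$, for which Theorem~\ref{Prop:NilpApprox} already provides the nilpotent approximation, together with the Ball--Box theorem in the form $\theta_p(\mathrm{Box}(0,C_1'\varepsilon)) \subset B_{d_{cc}}(p,\varepsilon) \subset \theta_p(\mathrm{Box}(0,C_2'\varepsilon))$. Composing the hypothesized inclusions for $\phi_p$ with the inverse ones for $\theta_p$, the transition map $\psi = \theta_p^{-1}\circ\phi_p$ satisfies $\mathrm{Box}(0,a\varepsilon) \subset \psi(\mathrm{Box}(0,\varepsilon)) \subset \mathrm{Box}(0,b\varepsilon)$ for fixed constants $0 < a \le b$ and all small $\varepsilon$; in other words $\psi$ is a local homeomorphism fixing $0$ that is bi-Lipschitz with respect to the quasimetric $d_\infty$ and commutes with dilatations up to bounded distortion. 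The point is then to transport the nilpotent approximation of $d_{cc}$ and of the $X_k$ from the $\theta_p$-chart to the $\phi_p$-chart along $\psi$: writing $\Delta_\varepsilon^{\phi} = \psi^{-1}\circ\Delta_\varepsilon^{\theta}\circ\psi$ for the dilatation expressed in the new chart, the box-bi-Lipschitz control on $\psi$ forces $\Delta_\varepsilon^{\phi}$ to agree with the coordinate dilatation $x\mapsto(\varepsilon^{\sigma_1}x_1,\dots,\varepsilon^{\sigma_N}x_N)$ in the limit $\varepsilon\to 0$ (on the level of the induced convergence), from which the uniform limits defining $\widehat X_k$ and $\widehat d_{cc}$ in the $\phi_p$-chart exist and coincide, via $\psi$, with those in the $\theta_p$-chart.

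The main obstacle is the ``if'' direction, and inside it the passage from the purely metric box-control on $\psi$ to the differential statement that the rescaled vector fields converge. The box inclusions only say that $\psi$ respects the anisotropic scale, not that it is $C^1$ or that $\psi_*X_k$ has a homogeneous limit; one must extract, from the quasimetric bi-Lipschitz estimate alone, enough uniform control to pass to the limit in the definition of $\widehat X_k$. I expect to handle this by working first with the metric statement (part~3 of Theorem~\ref{Prop:NilpApprox}), where only $d_{cc}$ and the topology of the box enter, establishing the uniform limit $\widehat d_{cc}^{\,\phi}$ directly from the scaled inclusions by an Arzelà--Ascoli / precompactness argument for the family $\tfrac1\varepsilon d_{cc}(\Delta_\varepsilon^\phi\cdot,\Delta_\varepsilon^\phi\cdot)$, using that this family is equi-quasimetric with uniform constants and that any subsequential limit is $\Delta$-homogeneous and pinned down by the box bounds; and then deriving the vector-field convergence (part~1) and the group structure from the metric limit together with the already-known first-kind approximation, rather than attempting it from scratch in the $\phi_p$-chart. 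A secondary technical point to watch is that in low smoothness $\phi_p$ and $\psi$ are themselves only as regular as the frame (merely $C^1$, or continuous), so all limits must be taken in the uniform, not the $C^1$, topology, which is exactly the setting in which Theorems~\ref{Prop:NilpApprox} and~\ref{Th:KarmVod} are stated.
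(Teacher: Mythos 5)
Your ``only if'' direction is essentially the paper's: it is Lemma~\ref{Lemma:DistBounds} and Theorem~\ref{Th:HomoNecessary} combined with the Ball--Box theorem, and that part is sound. The genuine gap is in the ``if'' direction, precisely where you flag the difficulty. You try to pass from the two-sided box control on the transition map $\psi=\theta_p^{-1}\circ\phi_p$ to the existence of the homogeneous limits by a compactness argument, claiming that any subsequential limit of $\tfrac1\varepsilon d_{cc}(\Delta^\phi_\varepsilon\cdot,\Delta^\phi_\varepsilon\cdot)$ is homogeneous and ``pinned down by the box bounds''. Neither claim holds: the scaling identity $\tfrac1\varepsilon\rho(\delta_\varepsilon\delta_t u,\delta_\varepsilon\delta_t v)=t\cdot\tfrac1{t\varepsilon}\rho(\delta_{t\varepsilon}u,\delta_{t\varepsilon}v)$ yields homogeneity only of a full limit, not of a limit along a subsequence $\varepsilon_j\to0$ (since $t\varepsilon_j$ is a different sequence), and the box inclusions determine a limit only up to two-sided comparability, not uniquely. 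In fact Remark~\ref{Rem:Sin1} is a counterexample to your scheme: the map $\Phi(x,y)=(x,y+f(x))$ there satisfies the two-sided box estimate, yet the rescaled quasimetric has no limit; since your argument nowhere uses the smoothness of $\phi_p$, it would ``prove'' sufficiency in the $C^{1,1}$ category, where it is false. The same objection applies a fortiori to your plan of extracting the vector-field convergence from the metric limit plus the first-kind approximation: Remark~\ref{Rem:Sin2} shows that even when $\delta_\varepsilon^{-1}\circ\Phi\circ\delta_\varepsilon$ converges uniformly, the rescaled fields $\Phi_*X_k$ may diverge if $\Phi$ is only $C^{1,1}$.

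What the paper does instead, and what your outline is missing, is an essential use of the regularity $\Phi\in C^m$ ($m$ the depth), which is available because Theorem~\ref{Prop:BallBox} is a statement about the smooth case. In Lemma~\ref{Lemma:SmoothCond} one Taylor-expands each $\Phi_k$ to weighted order $\sigma_k$ and observes that the box estimate $\Phi_k(x)=O(\varepsilon^{\sigma_k})$ on $\mathrm{Box}(\varepsilon)$ forces $D^\alpha\Phi_k(0)=0$ for all $\alpha$ with $\sigma(\alpha)<\sigma_k$; this vanishing then gives the uniform limits of both $\delta_\varepsilon^{-1}\circ\Phi\circ\delta_\varepsilon$ and $D\delta_\varepsilon^{-1}\circ D\Phi\circ D\delta_\varepsilon$, with polynomial limit $L$. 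Theorems~\ref{Th:Condition1} and~\ref{Th:Condition2} (Corollaries~\ref{Corollary:CC1} and~\ref{Corollary:CC2}) then transfer the first-kind approximations of the quasimetric and of the vector fields to the $\phi_p$-chart, and the Ball--Box theorem converts between $\theta_p(\mathrm{Box}(\varepsilon))$ and $B_{d_{cc}}(p,\varepsilon)$, yielding Corollary~\ref{Corollary:CC3} and hence the theorem. To repair your proof you must insert this Taylor-expansion step (or some equivalent exploitation of the $C^m$ smoothness of the transition map); the purely metric Arzel\`a--Ascoli argument cannot close the implication.
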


In this work we provide, as a byproduct, an independent proof 
of this result in a smooth equiregular case
(see Section~\ref{Sec:Smooth}), however, as
Remark~\ref{Rem:Sin1} shows, this assertion may fail
in low-smoothness setting.
We describe classes of coordinate systems
$\phi_p$ for which partial analogs of 
Theorem~\ref{Prop:NilpApprox} hold in
Carnot--Carath\'{e}odory spaces of low smoothness.

Since convergence theorems are already proved in canonical
coordinates of the 1st kind~$\theta_p$,
we focus on the properties of the \emph{transition map}
$\Phi_p = \phi_p^{-1} \circ \theta_p$, i.\,e. we obtain
conditions on the transition map to the new coordinates
which preserves limits of Theorem~\ref{Prop:NilpApprox}.
In smooth case, as one can conclude from 
Theorem~\ref{Prop:BallBox}, it is necessary and sufficient
that the coordinate change map $\Phi$ enjoys
\begin{equation}
\label{Eq:IntroCond1}
  \mathrm{Box}(0, C_1 \varepsilon)
  \subset \Phi(\mathrm{Box}(0, \varepsilon))
  \subset \mathrm{Box}(0, C_2 \varepsilon),
\end{equation}
for some $0 < C_1 \leq C_2 < \infty$ and for all
$\varepsilon \in (0, \varepsilon_0)$. In 
Section~\ref{Sec:Homogeneous} we show that when the transition
map does not have enough smoothness the 
condition~\eqref{Eq:IntroCond1} is still necessary
(Theorem~\ref{Th:HomoNecessary}) but is not enough 
(Remark~\ref{Rem:Sin1}).
Then in Section~\ref{Sec:Homogeneous} we obtain sufficient
condition on coordinate change that preserves the homogeneous
limit of quasimetrics (Theorem~\ref{Th:Condition1}),
namely:
if 1) $\Phi$ is a homeomorphism, 2) there exists
a limit
\begin{equation}
\label{Eq:IntroCond2}
  L(x) := \lim_{\varepsilon \to 0}
  \delta_\varepsilon^{-1} \circ \Phi
  \circ \delta_\varepsilon (x),
\end{equation}
uniform in neighborhood of the origin,
where $\delta_\varepsilon(x_1, \dots, x_N) = 
(\varepsilon^{\sigma_1} x_1, \dots,
\varepsilon^{\sigma_N} x_N)$,
and 3) $L$ is a homeomorphism as well then in new coordinates 
there is a uniform limit of quasimetrics (analogous
to the limit of metrics in Theorem~\ref{Prop:NilpApprox})
and $L$ is the isometry between limiting quasimetrics
in the original and new coordinate systems.

In Section~\ref{Sec:Nilpotent} we show that
condition~\eqref{Eq:IntroCond2} is still insufficient
for obtaining homogeneous limits of the basis vector fields 
(Remark~\ref{Rem:Sin2}).
Next, we obtain such sufficient condition
(Theorem~\ref{Th:Condition2}), namely:
if 1) $\Phi$ is a $C^1$-diffeomorphism,
2) there is a uniform limit
\begin{equation}
\label{Eq:IntroCond3}
  \lambda(x) := \lim_{\varepsilon \to 0}
  D \delta_\varepsilon^{-1} \circ D \Phi
  \circ D \delta_\varepsilon(x),
\end{equation}
and 3) $\det \lambda(0) \ne 0$,
then in new coordinate system there is a homogeneous
limit of vector fields
(same as in Theorem~\ref{Prop:NilpApprox})
and $\lambda$ is an isomorphism of homogeneous Lie algebras
between the original and new coordinate systems.

In Section~\ref{Sec:Smooth} we prove that in case when
$\Phi \in C^m$ where $m$ is the space depth all conditions
\eqref{Eq:IntroCond1}, \eqref{Eq:IntroCond2} and \eqref{Eq:IntroCond3} are equivalent. Examples from
Remarks~\ref{Rem:Sin1} and~\ref{Rem:Sin2} show that
when smoothness is lower all three conditions are distinct.

In Section~\ref{Sec:2Kind} it is proved that some
canonical coordinate systems (including 2nd kind coordinates) 
enjoy condition~\eqref{Eq:IntroCond2} in $C^1$-smooth case
and condition~\eqref{Eq:IntroCond3} in $C^m$-smooth.

\section{Homogeneous approximation of quasimetric spaces}
\label{Sec:Homogeneous}

\begin{definition}
Let $U \subset \mathbb{R}^N$ be a neighborhood of the origin.
By a \emph{quasimetric} on $U$ we mean such function 
$d : U \times U \to \mathbb{R}$ that

\begin{itemize}
\item
$d$ is continuous;

\item
$d(x, y) \geq 0$ for all $x, y \in U$;
$d(x, y) = 0$ $\Leftrightarrow$ $x = y$;

\item
$d(x, y) \leq C d(y, x)$ for all $x, y \in U$
and some $C \geq 1$;

\item
$d(x, z) \leq Q (d(x,y) + d(y,z))$ for all $x, y, z \in U$
and some $Q \geq 1$.
\end{itemize}

The pair $(U, d)$ is called a \emph{quasimetric space}.
\end{definition}

\begin{definition}
Let $(\sigma_1, \dots, \sigma_N)$ be a tuple of positive real
numbers. Introduce in $\mathbb{R}^N$ an one-parameter 
\emph{dilatation group}
\[
  \delta_\varepsilon(x_1, \dots, x_N)
  = (\varepsilon^{\sigma_1} x_1, \dots,
  \varepsilon^{\sigma_N} x_N),
  \quad \varepsilon > 0.
\]
\end{definition}

\begin{definition}
Define on $\mathbb{R}^N$
\emph{$\delta_\varepsilon$-homogeneous quasinorm}
\[
  \Vert x \Vert
  = \Vert (x_1, \dots, x_N) \Vert
  = \max\limits_{k = 1, \dots, N} |x_k|^{\frac{1}{\sigma_k}}.
\]
As $\mathrm{Box}(r)$ we denote a set of $x \in \mathbb{R}^N$
such that $\Vert x \Vert < r$.
Note, that
$\delta_\varepsilon \mathrm{Box}(r) = \mathrm{Box}(\varepsilon r)$.
\end{definition}

\begin{definition}
A quasimetric $\widehat d$ on $\mathbb{R}^N$ is called
$\delta_\varepsilon$-homogeneous if
\[
  \widehat d(\delta_\varepsilon x, \delta_\varepsilon y)
  = \varepsilon \, d(x, y)
\]
for all $x, y \in \mathbb{R}^N$, $\varepsilon > 0$.
A triple
$(\mathbb{R}^N, \delta_\varepsilon, \widehat d)$
is called \emph{$\delta_\varepsilon$-homogeneous
quasimetric space}.
\end{definition}

\begin{definition}
We say that the quasimetric space
$(\mathbb{R}^N, \delta_\varepsilon, \widehat d)$ is the
\emph{$\delta_\varepsilon$-homogeneous approximation} for
$(U, d)$ if there is a limit
\begin{equation}
\label{Eq:QuasiLimit}
  \lim\limits_{\varepsilon \to 0} \frac{1}{\varepsilon}
  d(\delta_\varepsilon x, \delta_\varepsilon y)
  = \widehat d(x, y)
\end{equation}
uniform in a neighborhood of the origin. If this limit exists
we say that $(U, d)$ has the $\delta_\varepsilon$-homogeneous 
approximation.
\end{definition}

\begin{remark}
In terminology of the work~\cite{Seliv} the space
$(\mathbb{R}^N, \widehat{d})$ is a \emph{local tangent
cone} to the quasimetric space $(U, d)$.
This notion generalizes the notion of a tangent cone to
a metric space introduced by M.\,Gromov~\cite{GromovMetric}.
We do not use that terminology since the functional approach
of these works defines the tangent cone up to isometry.
Our approach, albeit more na\"{\i}ve, distinguishes
between homogeneous approximations in different coordinates 
even if the resulting spaces are isometric.
\end{remark}

\begin{lemma}
\label{Lemma:DistBounds}
If quasimetric $d$ has the $\delta_\varepsilon$-homogeneous 
approximation then there are constants
$0 < C_1 \leq C_2 < \infty$ and $r_0 > 0$ such that
\[
  C_1 \| x \|  \leq  d(0, x)  \leq  C_2 \| x \|
\]
for all $x \in \mathrm{Box}(r_0)$.
\end{lemma}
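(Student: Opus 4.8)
The plan is to extract the two-sided bound directly from the defining limit~\eqref{Eq:QuasiLimit} by exploiting the homogeneity of $\widehat d$ and a compactness argument on the unit sphere of the quasinorm. First I would record the key scaling identity: setting $y = 0$ in~\eqref{Eq:QuasiLimit} and using $\delta_\varepsilon 0 = 0$ gives
\[
  \widehat d(x, 0) = \lim_{\varepsilon \to 0}
  \frac{1}{\varepsilon} d(\delta_\varepsilon x, 0),
\]
uniformly for $x$ in some $\mathrm{Box}(r)$. Since $\widehat d$ is a quasimetric on $\mathbb{R}^N$, the function $x \mapsto \widehat d(x, 0)$ is continuous and vanishes only at the origin; hence on the compact ``sphere'' $S = \{ x : \|x\| = 1 \}$ it attains a positive minimum $a > 0$ and a finite maximum $b < \infty$. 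By $\delta_\varepsilon$-homogeneity of $\widehat d$, every nonzero $x$ can be written $x = \delta_{\|x\|} \xi$ with $\xi \in S$, so $\widehat d(x, 0) = \|x\| \, \widehat d(\xi, 0)$, which yields the clean two-sided estimate $a \|x\| \le \widehat d(x, 0) \le b \|x\|$ for all $x \in \mathbb{R}^N$.

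Next I would transfer these bounds to $d$ itself. Fix $\rho > 0$ small enough that $\overline{\mathrm{Box}(\rho)} \subset \mathrm{Box}(r)$ and the convergence in~\eqref{Eq:QuasiLimit} (with $y=0$) is uniform on $\overline{\mathrm{Box}(\rho)}$. Choose $\varepsilon_0 \in (0,1)$ so small that
\[
  \left| \frac{1}{\varepsilon} d(\delta_\varepsilon x, 0)
  - \widehat d(x, 0) \right| \le \frac{a}{2} \, \|x\|
\]
for all $\varepsilon \in (0, \varepsilon_0)$ and all $x \in \overline{\mathrm{Box}(\rho)}$ --- here one uses that $\|x\|$ is bounded below away from $0$ on, say, the annulus $\{ \rho/2 \le \|x\| \le \rho \}$, and handles the factor by a standard uniform-continuity bookkeeping. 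Combined with $a\|x\| \le \widehat d(x,0) \le b\|x\|$ this gives
\[
  \tfrac{a}{2} \, \varepsilon \|x\|
  \le d(\delta_\varepsilon x, 0)
  \le \bigl( b + \tfrac{a}{2} \bigr) \, \varepsilon \|x\|
\]
for $x \in \overline{\mathrm{Box}(\rho)}$, $\varepsilon \in (0,\varepsilon_0)$. Now given an arbitrary $z$ with $\|z\|$ small, write $z = \delta_\varepsilon x$ with $x \in S$ (so $\|x\| = 1$) and $\varepsilon = \|z\|$; choosing $r_0 = \min\{\rho, \varepsilon_0\}$ ensures $\varepsilon < \varepsilon_0$ and $x \in \mathrm{Box}(\rho)$, and since $\|z\| = \varepsilon \|x\| = \varepsilon$, the displayed inequalities become $\tfrac{a}{2} \|z\| \le d(0, z) \le (b + \tfrac{a}{2}) \|z\|$ --- after also invoking the quasi-symmetry $d(0,z) \le C\, d(z,0)$ and $d(z,0) \le C\, d(0,z)$ to move between $d(\delta_\varepsilon x, 0)$ and $d(0, \delta_\varepsilon x)$, which only changes the constants by the fixed factor $C$. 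Setting $C_1 = a/(2C)$ and $C_2 = C(b + a/2)$ finishes the proof.

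The main obstacle I anticipate is purely bookkeeping: the convergence in~\eqref{Eq:QuasiLimit} is uniform in $x$ over a fixed box but the error must be controlled \emph{relative to} $\|x\|$, which degenerates near the origin. The cleanest way around this is never to let $x$ approach $0$: always normalize so that $x$ lies on the unit sphere $S$ (or in a fixed annulus bounded away from $0$), absorbing all the scaling into $\varepsilon = \|z\|$, so that the uniform estimate on the compact set $S$ is all that is needed. With that device the argument is routine; the only genuine inputs are the continuity and positivity of $\widehat d(\cdot, 0)$, its exact homogeneity, and compactness of $S$.
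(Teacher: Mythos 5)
Your argument is essentially the paper's own proof: bound $\widehat d(\cdot,0)$ on a compact quasinorm sphere, spread that bound to all points by $\delta_\varepsilon$-homogeneity, and transfer it to $d$ via the uniform limit, with quasi-symmetry absorbing the swap of arguments in $d$. The one slip is that you normalize to the unit sphere $\{\|x\|=1\}$, which need not lie inside the neighborhood where the convergence is uniform (and indeed contradicts your own requirement $x\in\mathrm{Box}(\rho)$ when $\rho<1$); normalizing instead to a sphere $\{\|x\|=r_0\}$ contained in that neighborhood and writing small $z$ as $z=\delta_{\|z\|/r_0}v$ with $\|v\|=r_0$ --- exactly as the paper does --- repairs the bookkeeping without changing the argument.
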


\begin{proof}
Let $\delta_\varepsilon$-homogeneous
quasimetric $\widehat d$ be the
$\delta_\varepsilon$-homogeneous approximation for $d$.
Then there is $r_0 > 0$ such that
\begin{equation}
\label{Eq:ZeroConvergence}
  \frac{1}{\varepsilon} d(0, \delta_\varepsilon x)
  \to \widehat d(0, x)
  \quad \text{as } \varepsilon \to 0
\end{equation}
uniformly in $x \in \mathrm{Box}(2 r_0)$.

Let
$m = \inf \{ \widehat d(0, v) : \Vert v \Vert = r_0 \}$,
$M = \sup \{ \widehat d(0, v) : \Vert v \Vert = r_0 \}$.
There is $\varepsilon_1 > 0$ such that for all
$v \in \partial \mathrm{Box}(r_0)$ and
$\varepsilon < \varepsilon_1$ we have
\[
  \frac{m}{2}
  < \frac{1}{\varepsilon} d(0, \delta_\varepsilon v)
  < 2 M.
\]
Consequently, for all
$\varepsilon < \min \{ \varepsilon_1, 2 \}$ and
$x \in \partial \mathrm{Box}(\varepsilon r_0)$
it holds
\[
  \frac{m}{2 r_0} \Vert x \Vert
  = \frac{m}{2} \varepsilon
  < d(0, x)
  < 2 M \varepsilon
  = \frac{2 M}{r_0} \Vert x \Vert.
\]
Thus, the lemma is proved.
\end{proof}

\begin{theorem}[necessary condition of the homogeneous
approximation in new coordinates]
\label{Th:HomoNecessary}
Let $(U,d)$ be a quasimetric space that has
$\delta_\varepsilon$-homogeneous approximation,
$\Phi : U \to \Phi(U)$ be a homeomorphism and $\Phi(0)=0$. Define quasimetric
$\rho$ on $\Phi(U)$ as
\[
  \rho(u, v) = d(\Phi^{-1}(u), \Phi^{-1}(v)).
\]

If quasimetric space $(\Phi(U), \rho)$ has
$\delta_\varepsilon$-homogeneous approximation then there are
\linebreak
$\varepsilon_0 > 0$ and $0 < C_1 \leq C_2 < \infty$ such that
\begin{equation}
\label{Eq:HomoNecessary}
  \mathrm{Box}(C_1 \varepsilon)
  \subset
  \Phi( \mathrm{Box}(\varepsilon) )
  \subset
  \mathrm{Box}(C_2 \varepsilon)
\end{equation}
for all $\varepsilon \in (0, \varepsilon_0)$.
\end{theorem}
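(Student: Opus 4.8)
The plan is to apply Lemma~\ref{Lemma:DistBounds} to both quasimetric spaces $(U,d)$ and $(\Phi(U),\rho)$ simultaneously, and then to translate the two resulting pairs of bounds into the set inclusions~\eqref{Eq:HomoNecessary}. First I would invoke Lemma~\ref{Lemma:DistBounds} for $(U,d)$ to obtain constants $0 < a_1 \le a_2 < \infty$ and a radius $r_0 > 0$ with
\[
  a_1 \Vert x \Vert \le d(0, x) \le a_2 \Vert x \Vert
  \quad \text{for all } x \in \mathrm{Box}(r_0).
\]
Then I would apply the same lemma to $(\Phi(U),\rho)$ — which is legitimate precisely because the hypothesis grants it a $\delta_\varepsilon$-homogeneous approximation — to get constants $0 < b_1 \le b_2 < \infty$ and a radius $s_0 > 0$ with
\[
  b_1 \Vert u \Vert \le \rho(0, u) \le b_2 \Vert u \Vert
  \quad \text{for all } u \in \mathrm{Box}(s_0).
\]
Since $\Phi$ is a homeomorphism with $\Phi(0)=0$, for small enough radius the image $\Phi(\mathrm{Box}(r))$ and its preimages stay inside the domains where both estimates are valid; I would fix $\varepsilon_0 > 0$ small enough that $\mathrm{Box}(\varepsilon_0) \subset U$, $\Phi(\mathrm{Box}(\varepsilon_0)) \subset \mathrm{Box}(s_0)$, and (using continuity of $\Phi^{-1}$) $\mathrm{Box}(\varepsilon_0) \subset \mathrm{Box}(r_0)$.

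The key identity is that for $u = \Phi(x)$ with $x \in \mathrm{Box}(\varepsilon_0)$ we have $\rho(0,u) = d(\Phi^{-1}(0),\Phi^{-1}(u)) = d(0,x)$. Combining the two chains of inequalities gives, for all such $x$,
\[
  b_1 \Vert \Phi(x) \Vert \le \rho(0, \Phi(x)) = d(0, x) \le a_2 \Vert x \Vert,
\]
hence $\Vert \Phi(x) \Vert \le (a_2/b_1) \Vert x \Vert$, and symmetrically
\[
  b_2 \Vert \Phi(x) \Vert \ge \rho(0, \Phi(x)) = d(0, x) \ge a_1 \Vert x \Vert,
\]
hence $\Vert \Phi(x) \Vert \ge (a_1/b_2) \Vert x \Vert$. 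Writing $C_2 = a_2/b_1$ and $C_1 = a_1/b_2$, this says that $x \in \mathrm{Box}(\varepsilon)$ (i.e. $\Vert x \Vert < \varepsilon$) with $\varepsilon \le \varepsilon_0$ forces $C_1 \varepsilon \le C_1 \Vert x\Vert^{-1}\Vert x\Vert\cdot(\varepsilon/\Vert x\Vert)$ — more cleanly: $\Vert \Phi(x) \Vert < C_2 \varepsilon$, which gives the inclusion $\Phi(\mathrm{Box}(\varepsilon)) \subset \mathrm{Box}(C_2 \varepsilon)$. For the left inclusion I would argue that any $y \in \mathrm{Box}(C_1 \varepsilon)$ satisfies $y = \Phi(x)$ for a unique $x \in \mathrm{Box}(\varepsilon_0)$ (surjectivity onto a neighborhood, shrinking $\varepsilon_0$ once more if needed so that $\mathrm{Box}(C_1\varepsilon_0)\subset\Phi(\mathrm{Box}(\varepsilon_0))$), and then $C_1 \Vert x\Vert \le \Vert \Phi(x)\Vert = \Vert y \Vert < C_1 \varepsilon$ gives $\Vert x \Vert < \varepsilon$, i.e. $x \in \mathrm{Box}(\varepsilon)$, so $y \in \Phi(\mathrm{Box}(\varepsilon))$.

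The one genuine subtlety — and the step I expect to need the most care — is the left-hand inclusion $\mathrm{Box}(C_1\varepsilon) \subset \Phi(\mathrm{Box}(\varepsilon))$: the two-sided estimate on $\Vert\Phi(x)\Vert$ only controls points already known to lie in $\Phi(\mathrm{Box}(\varepsilon_0))$, so I must separately ensure via invariance of domain (or simply via continuity of $\Phi^{-1}$ together with $\Phi(0)=0$) that the small box $\mathrm{Box}(C_1\varepsilon)$ around $0$ is indeed covered by $\Phi(\mathrm{Box}(\varepsilon_0))$ for $\varepsilon$ below a suitably reduced $\varepsilon_0$. Once that covering is in hand, the estimate does the rest, and rescaling all the constants and the threshold $\varepsilon_0$ at the end gives exactly~\eqref{Eq:HomoNecessary}.
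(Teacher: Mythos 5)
Your proposal is correct and follows essentially the same route as the paper: apply Lemma~\ref{Lemma:DistBounds} to both $d$ and $\rho$, use $\rho(0,\Phi(x))=d(0,x)$ (via $\Phi(0)=0$) to get the two-sided estimate $C_1\Vert x\Vert\le\Vert\Phi(x)\Vert\le C_2\Vert x\Vert$ on a small box, and then read off the inclusions~\eqref{Eq:HomoNecessary}. Your extra care with the left-hand inclusion (ensuring $\mathrm{Box}(C_1\varepsilon)$ lies in the image of a small box, which the paper handles with its auxiliary radii $r_3,r_4$ before stating ``the theorem follows'') is exactly the right point to spell out.
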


\begin{proof}
By Lemma~\ref{Lemma:DistBounds}
for quasimetrics $d$ and $\rho$ there are positive constants
$r_1$, $r_2$, $c_1$, $c_2$, $c_3$, $c_4$ such that
\[
  c_1 \| x \| \leq d(0, x) \leq c_2 \| x \|,
  \quad
  c_3 \| y \| \leq \rho(0, y) \leq c_4 \| y \|.
\]
for all $x \in \mathrm{Box}(r_1)$, $y \in \mathrm{Box}(r_2)$.
Since $\Phi$ is a homeomorphism of a neighborhood of the
origin, there are positive constants $r_3 \leq r_2$ and
$r_4 \leq r_1$ such that
$\mathrm{Box}(r_3) \subset \Phi^{-1}(\mathrm{Box}(r_2))$
and $\mathrm{Box}(r_4) \subset \Phi(\mathrm{Box}(r_3))$.
Then for all $x \in \mathrm{Box}(r_4)$ we have
\[
  \frac{c_1}{c_4} \| x \|
  \leq \frac{1}{c_4} d(0, x)
  = \frac{1}{c_4} \rho(0, \Phi(x))
  \leq \| \Phi(x) \|
  \leq \frac{1}{c_3} \rho(0, \Phi(x))
  = \frac{1}{c_3} d(0, x)
  \leq \frac{c_2}{c_3} \| x \|.
\]
The theorem follows.
\end{proof}

\begin{remark}
\label{Rem:Sin1}
Let us demonstrate that conditions of 
Theorem~\ref{Th:HomoNecessary} are not sufficient in the
general case. Consider a plane $\mathbb{R}^2$ with coordinates 
$(x,y)$, dilatation
$\delta_\varepsilon(x,y) = (\varepsilon x, \varepsilon^2 y)$
and the $\delta_\varepsilon$-homogeneous metric
\[
  d \bigl( (x_1, y_1), (x_2, y_2) \bigr)
  = \sqrt{(x_1 - x_2)^2 + |y_1 - y_2|}.
\]
Consider a coordinate change $\Phi(x, y) = (x, y + f(x))$
where
\[
  f(x) =
  \begin{cases}
  \frac{x^2}{2} \sin \frac{1}{|x|^{1-\beta}}, & x \ne 0, \\
  0, & x = 0,
  \end{cases}
\]
for some $\beta \in (0, 1)$.
Then $f \in C^{1,\beta} \setminus C^2$ and $f'(0) = 0$.
Note, that $D\Phi(0) = \mathrm{Id}$ and as a consequence
$\Phi$ is a $C^{1,\beta}$-diffeomorphism of a neighborhood
of the origin. Also, the following estimate holds:
\[
  \frac{1}{2} (|x|^2 + |y|)
  \leq \frac{|x|^2}{2} + |y|
  \leq |\Phi_1(x,y)|^2 + |\Phi_2(x,y)|
  \leq \frac{3}{2} |x|^2 + |y|
  \leq \frac{3}{2}( |x|^2 + |y| ).
\]
But it is easy to see that the metric
$\rho(u, v) = d(\Phi(u), \Phi(v))$ does not have a
$\delta_\varepsilon$-homogeneous approximation.
Indeed,
\[
  \frac{1}{\varepsilon} d \bigl(
    \Phi(\varepsilon x_1, \varepsilon^2 y_1),
    \Phi(\varepsilon x_2, \varepsilon^2 y_2)
  \bigr) = \sqrt{
    (x_1 - x_2)^2
    + | y_1 - y_2 - \tfrac{1}{\varepsilon^2}
    (f(\varepsilon x_1) - f(\varepsilon x_2)) |
  },
\]
where the expression
\[
  \frac{1}{\varepsilon^2}
  (f(\varepsilon x_1) - f(\varepsilon x_2))
  = \frac{x_1}{2} \sin \frac{1}{|\varepsilon x_1|^{1-\beta}}
  - \frac{x_2}{2} \sin \frac{1}{|\varepsilon x_2|^{1-\beta}}
\]
has no limit when $x_1 \ne x_2$ as $\varepsilon \to 0$.

It is possible to provide analogous example for functions
of class $C^{1,1}$. For instance, one can consider
$f(x) = \int_0^x t \sin \frac{1}{t} \, dt$.

For $C^2$-smooth maps on this particular metric space,
however, the conditions of Theorem~\ref{Th:HomoNecessary}
are sufficient. We'll show this in 
Lemma~\ref{Lemma:SmoothCond}.
\end{remark}

\begin{lemma}
\label{Lemma:Condition1}
Let $U \subset \mathbb{R}^N$ be a neighborhood of the origin,
$\Phi : U \to \mathbb{R}^N$ be a continuous mapping.
There is an uniform in a neighborhood of the origin limit
\begin{equation}
\label{Eq:NonSmoothCond1}
  L(x) := \lim_{\varepsilon \to 0}
  \delta_\varepsilon^{-1} \circ \Phi \circ
  \delta_\varepsilon (x)
\end{equation}
if and only if there is a continuous
$\delta_\varepsilon$-homogeneous mapping
$L : \mathbb{R}^N \to \mathbb{R}^N$ such that
\begin{equation}
\label{Eq:NonSmoothCond1L}
  \Phi_k(x) = L_k(x) + o(\varepsilon^{\sigma_k}),
  \quad
  k = 1, \ldots, N,
\end{equation}
as $\varepsilon \to 0$ and
$x \in \mathrm{Box}(\varepsilon)$.

If either of these conditions is fulfilled and both mappings
$\Phi$ and $L$ are homeomorphisms then there is also a limit
\[
  \lim_{\varepsilon \to 0}
  \delta_\varepsilon^{-1} \circ \Phi^{-1} \circ \delta_\varepsilon(y)
  = L^{-1}(y)
\]
and it is uniform in a neighborhood of the origin.
\end{lemma}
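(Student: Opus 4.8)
The plan is to prove the three claims in sequence: (i) the limit \eqref{Eq:NonSmoothCond1} exists iff a homogeneous $L$ as in \eqref{Eq:NonSmoothCond1L} exists; (ii) then, assuming $\Phi$ and $L$ are homeomorphisms, transfer the same conclusion to $\Phi^{-1}$.

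\emph{Equivalence of the two conditions.} Unwinding the definition of $\delta_\varepsilon$, the $k$-th component of $\delta_\varepsilon^{-1}\circ\Phi\circ\delta_\varepsilon(x)$ equals $\varepsilon^{-\sigma_k}\Phi_k(\delta_\varepsilon x)$. So existence of the uniform limit $L(x)$ on a fixed $\mathrm{Box}(r)$ means precisely that $\varepsilon^{-\sigma_k}\Phi_k(\delta_\varepsilon x)\to L_k(x)$ uniformly for $x\in\mathrm{Box}(r)$, for each $k$. First I would check $L$ is $\delta_\varepsilon$-homogeneous: for fixed $t>0$, write $\delta_\varepsilon^{-1}\Phi\delta_\varepsilon(\delta_t x)=\delta_t\bigl(\delta_{t\varepsilon}^{-1}\Phi\delta_{t\varepsilon}(x)\bigr)$ using $\delta_\varepsilon\delta_t=\delta_{\varepsilon t}$ and the linearity of $\delta_t$; letting $\varepsilon\to 0$ gives $L(\delta_t x)=\delta_t L(x)$. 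Continuity of $L$ follows since it is a uniform limit of the continuous maps $x\mapsto\delta_\varepsilon^{-1}\Phi\delta_\varepsilon(x)$. For the translation between the $o(\varepsilon^{\sigma_k})$ formulation and the limit formulation, set $y=\delta_\varepsilon x\in\mathrm{Box}(\varepsilon)$, so $\|y\|<\varepsilon$ and $x=\delta_\varepsilon^{-1}y\in\mathrm{Box}(1)$; then $\Phi_k(y)-L_k(y)=\varepsilon^{\sigma_k}\bigl(\varepsilon^{-\sigma_k}\Phi_k(\delta_\varepsilon x)-L_k(x)\bigr)$ (using homogeneity $L_k(\delta_\varepsilon x)=\varepsilon^{\sigma_k}L_k(x)$), and the bracket tends to $0$ uniformly in $x\in\mathrm{Box}(1)$ iff the limit in \eqref{Eq:NonSmoothCond1} holds. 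One should be slightly careful that "$o(\varepsilon^{\sigma_k})$ as $x\in\mathrm{Box}(\varepsilon)$" is interpreted as: $\sup_{\|y\|<\varepsilon}|\Phi_k(y)-L_k(y)|/\varepsilon^{\sigma_k}\to 0$; with that reading the equivalence is immediate from the displayed identity.

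\emph{Passing to the inverse.} Now suppose $\Phi$ and $L$ are homeomorphisms near $0$ (note $\Phi(0)=L(0)=0$ automatically, since plugging $x=0$ gives $L(0)=\lim\delta_\varepsilon^{-1}\Phi(0)$, and this can only converge if $\Phi(0)=0$). Write $\Phi_\varepsilon:=\delta_\varepsilon^{-1}\circ\Phi\circ\delta_\varepsilon$, a homeomorphism of the appropriate boxes onto their images, with inverse $(\Phi_\varepsilon)^{-1}=\delta_\varepsilon^{-1}\circ\Phi^{-1}\circ\delta_\varepsilon$ — exactly the map whose limit we want. So the statement is: $\Phi_\varepsilon\to L$ uniformly and $\Phi_\varepsilon,L$ homeomorphisms imply $(\Phi_\varepsilon)^{-1}\to L^{-1}$ uniformly near $0$. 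The key quantitative input is Theorem~\ref{Th:HomoNecessary} (or rather its proof): from the hypotheses one gets $\mathrm{Box}(C_1\varepsilon)\subset\Phi(\mathrm{Box}(\varepsilon))\subset\mathrm{Box}(C_2\varepsilon)$, equivalently $\mathrm{Box}(C_1)\subset\Phi_\varepsilon(\mathrm{Box}(1))\subset\mathrm{Box}(C_2)$ for all small $\varepsilon$, uniformly; hence $(\Phi_\varepsilon)^{-1}$ is defined on a fixed box $\mathrm{Box}(C_1)$ for all small $\varepsilon$. Actually, to invoke Theorem~\ref{Th:HomoNecessary} I only need a quasimetric that has a homogeneous approximation; one can simply take $d=\|x-y\|$-type — more cleanly, I would avoid the metric detour and argue directly: the uniform two-sided box bounds for $\Phi_\varepsilon$ follow already from the uniform convergence $\Phi_\varepsilon\to L$ together with the fact that $L$, being a homogeneous homeomorphism, satisfies $c_1\|x\|\le\|L(x)\|\le c_2\|x\|$ (homogeneity plus compactness of $\partial\mathrm{Box}(1)$ and $L(x)\ne0$ for $x\ne0$).

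\emph{Uniform convergence of the inverses.} With $(\Phi_\varepsilon)^{-1}$ defined on a common box, I would run a standard compactness/equicontinuity argument. Fix $y$ in a small box and write $x_\varepsilon:=(\Phi_\varepsilon)^{-1}(y)$; the uniform box bounds give $\|x_\varepsilon\|\le C\|y\|$, so the $x_\varepsilon$ stay in a fixed compact set. If $x_\varepsilon\not\to L^{-1}(y)$, pass to a subsequence $x_{\varepsilon_j}\to x_*\ne L^{-1}(y)$; then $y=\Phi_{\varepsilon_j}(x_{\varepsilon_j})$, and since $\Phi_{\varepsilon_j}\to L$ uniformly on the compact set and $x_{\varepsilon_j}\to x_*$, we get $\Phi_{\varepsilon_j}(x_{\varepsilon_j})\to L(x_*)$, whence $L(x_*)=y$, i.e. $x_*=L^{-1}(y)$, a contradiction. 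This gives pointwise convergence; to upgrade to uniform convergence on a fixed box one either notes that the same subsequence argument applied to a sequence $y_j$ realizing the sup of $|(\Phi_{\varepsilon}^{-1})(y)-L^{-1}(y)|$ over the box works verbatim (compactness again), or invokes a Dini-type/equicontinuity argument: the family $\{(\Phi_\varepsilon)^{-1}\}$ is equicontinuous because $\Phi_\varepsilon$ are "uniformly open" (uniform box bounds give a uniform modulus of injectivity), and a pointwise-convergent equicontinuous family on a compact set converges uniformly. The main obstacle is exactly this last point — making the inverses equicontinuous uniformly in $\varepsilon$; I expect the clean route is the subsequence-on-the-sup argument, which sidesteps equicontinuity entirely and only uses compactness plus uniform convergence of $\Phi_\varepsilon\to L$ on a fixed compact box together with the uniform lower box bound that keeps $(\Phi_\varepsilon)^{-1}$ from escaping to infinity. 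Finally, since $L^{-1}$ is automatically $\delta_\varepsilon$-homogeneous (invert $L(\delta_t x)=\delta_t L(x)$) and continuous, the limiting map in the inverse statement is of the same type, completing the proof.
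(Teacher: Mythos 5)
Your proposal is correct and follows essentially the same route as the paper: the equivalence of \eqref{Eq:NonSmoothCond1} and \eqref{Eq:NonSmoothCond1L} is obtained by the same componentwise rescaling and the homogeneity identity $L(\delta_t x)=\delta_t L(x)$, and the inverse statement rests on the same two ingredients the paper uses — the two-sided quasinorm bounds $c_1\Vert x\Vert \le \Vert L(x)\Vert \le c_2\Vert x\Vert$ (homogeneity plus compactness of the unit sphere), which keep $\Phi_\varepsilon^{-1}$ defined on a fixed box with values in a fixed compact set, together with the uniform convergence $\Phi_\varepsilon\to L$ there. The only difference is the closing step: the paper writes $y-L(\Phi_\varepsilon^{-1}(y))=\Phi_\varepsilon(\Phi_\varepsilon^{-1}(y))-L(\Phi_\varepsilon^{-1}(y))=o(1)$ and then applies uniform continuity of $L^{-1}$ on a closed box, while you reach the same conclusion by the subsequence-on-the-sup compactness argument; both are valid and essentially equivalent.
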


\begin{proof}
Let the uniform limit~\eqref{Eq:NonSmoothCond1} exist for
$x \in \mathrm{Box}(r_0)$. Then the limiting map $L$
is continuous and for all $t \in (0, 1]$ the following holds
\begin{equation}
\label{Eq:LMustBeHomo}
  L_k(\delta_t x)
  = \lim_{\varepsilon \to 0}
  \frac{1}{\varepsilon^{\sigma_k}}
  \Phi_k(\delta_\varepsilon \delta_t x)
  = \lim_{\varepsilon \to 0}
  t^{\sigma_k} \frac{1}{(t\varepsilon)^{\sigma_k}}
  \Phi_k(\delta_{t \varepsilon} x)
  = t^{\sigma_k} L_k(x),
\end{equation}
i.\,e. $\delta_t \circ L = L \circ \delta_t$.
We can extend $L$ by homogeneity to a continuous
$\delta_\varepsilon$-homogeneous mapping on $\mathbb{R}^N$.
Moreover, for all
$x \in \mathrm{Box}(r_0)$ we have
\[
  \Phi_k (\delta_\varepsilon x)
  = \varepsilon^{\sigma_k} \bigl(
    \varepsilon^{-\sigma_k} \Phi_k (\delta_\varepsilon(x)) 
    \bigr)
  = \varepsilon^{\sigma_k} \bigl( L(x) + o(1) \bigr)
  = L(\delta_\varepsilon x) + o(\varepsilon^{\sigma_k}).
\]

Conversely, let the condition~\eqref{Eq:NonSmoothCond1L} hold.
Fix $r_0 > 0$ such that
$\overline{\mathrm{Box}}(r_0) \subset U$.
Then
\begin{equation}
\label{Eq:LimPhiEIsUniform}
  \frac{1}{\varepsilon^{\sigma_k}}
  \Phi_k(\delta_\varepsilon x)
  = \frac{1}{\varepsilon^{\sigma_k}} \bigr(
    L_k(\delta_\varepsilon x) + o(\varepsilon^{\sigma_k}) 
  \bigr)
  = L_k(x) + o(1)
\end{equation}
as $\varepsilon \to 0$ uniformly in
$x \in \mathrm{Box}(r_0)$.
Thus, conditions \eqref{Eq:NonSmoothCond1} and
\eqref{Eq:NonSmoothCond1L} are equivalent.

Next, let $\Phi$ and $L$ be homeomorphisms.
Since the map $L$ is continuous and
$\delta_\varepsilon$-homogeneous, we have
\[
  M = \sup_{x \ne 0}
  \frac{\Vert L(x) \Vert}{\Vert x \Vert}
  = \sup_{x \ne 0}
  \Bigl\Vert \delta^{-1}_{\Vert x \Vert} L(x) \Bigr\Vert
  = \sup_{x \ne 0}
  \Bigl\Vert L \Bigl(
    \delta^{-1}_{\Vert x \Vert} x
  \Bigr) \Bigr\Vert
  = \sup_{\Vert v \Vert = 1}
  \Vert L(v) \Vert < \infty.
\]
Since $L(x) \ne 0$ when $x \ne 0$, by the same reasoning
we have
\[
  m = \sup_{x \ne 0}
  \frac{\Vert x \Vert}{\Vert L(x) \Vert}
  = \sup_{\Vert v \Vert = 1}
  \frac{1}{\Vert L(v) \Vert} < \infty.
\]
Therefore, two-sided estimates
\[
  \frac{1}{m} \Vert x \Vert
  \leq \Vert L(x) \Vert \leq M \Vert x \Vert,
  \quad
  \frac{1}{M} \Vert y \Vert
  \leq \Vert L^{-1}(y) \Vert \leq m \Vert y \Vert,
  \quad
\]
hold for all $x, y \in \mathbb{R}^N$. Consequently, there is a
neighborhood of the origin $V$ such that two-sided
estimates
\[
  \frac{1}{2m} \Vert x \Vert
  \leq \Vert \Phi(x) \Vert \leq 2M \Vert x \Vert,
  \quad
  \frac{1}{2M} \Vert y \Vert
  \leq \Vert \Phi^{-1}(y) \Vert \leq 2m \Vert y \Vert,
  \quad
\]
hold for all $x \in V$, $y \in \Phi(V)$.
Let $\Phi_\varepsilon(x) = \delta_\varepsilon^{-1} \circ
\Phi \circ \delta_\varepsilon(x)$ and let
$r_1 \leq r_0$ be such that $\mathrm{Box}(r_1) \subset V$,
$\mathrm{Box}(\frac{r_1}{2m}) \subset \Phi(V)$.
Then
\[
  \Vert \Phi^{-1}_\varepsilon(y) \Vert
  = \frac{1}{\varepsilon} \Vert \Phi^{-1}
    (\delta_\varepsilon y) \Vert
  \leq \frac{2m}{\varepsilon} \Vert \delta_\varepsilon y \Vert
  = 2m \Vert y \Vert
\]
for all $y \in \mathrm{Box}(\frac{r_1}{2m})$
and $\varepsilon > 0$, i.\,e.
$\Phi^{-1}_\varepsilon( \mathrm{Box}(\frac{r_1}{2m}) )
\subset \mathrm{Box}(r_1)$.
From~\eqref{Eq:LimPhiEIsUniform}
it follows that
$\Vert \Phi_\varepsilon(x) - L(x) \Vert = o(1)$
as $\varepsilon \to 0$
uniformly in $x \in \mathrm{Box}(r_0)$. Therefore
\[
  y - L \circ \Phi^{-1}_\varepsilon (y)
  = \Phi_\varepsilon(\Phi_\varepsilon^{-1}(y))
    - L (\Phi_\varepsilon^{-1} (y))
  = o(1)
\]
as $\varepsilon \to 0$ uniformly in
$y \in \mathrm{Box}(\frac{r_1}{2m})$.
Since the continuous map $L^{-1}$ is uniformly continuous
on $\overline{\mathrm{Box}}(\frac{r_1}{2m})$,
we have
\[
  L^{-1}(y) - \Phi^{-1}_\varepsilon(y)
  = L^{-1}(y) - L^{-1} ( L \circ \Phi^{-1}_\varepsilon(y))
  = o(1)
\]
as $\varepsilon \to 0$ uniformly on
$\mathrm{Box}(\frac{r_1}{2m})$.
The lemma is proved.
\end{proof}

\begin{theorem}[sufficient condition of the homogeneous
approximation in new coordinates]
\label{Th:Condition1}
Let $U \subset \mathbb{R}^N$ be a neighborhood of the origin,
$d$ be a continuous quasimetric on $U$ and quasimetric
$\widehat d$ be its $\delta_\varepsilon$-homogeneous
approximation.
Let $\Phi : U \to \mathbb{R}^N$ be a homeomorphism
on a neighborhood of the origin
such that there is a $\delta_\varepsilon$-homogeneous
homeomorphism $L : \mathbb{R}^N \to \mathbb{R}^N$
enjoying the condition
\[
  L(x) = \lim_{\varepsilon \to 0}
  \delta_\varepsilon^{-1} \circ \Phi \circ 
  \delta_\varepsilon(x),
\]
as $\varepsilon \to 0$ uniformly in
$x \in \mathrm{Box}(r_0)$.
Consider quasimetric space $(\Phi^{-1}(U), \rho)$
where $\rho(u, v) = d(\Phi(u), \Phi(v))$.
Then

$1.$
There is a limit
\[
  \widehat \rho (u,v) := \lim_{\varepsilon \to 0}
  \frac{1}{\varepsilon}
  \rho(\delta_\varepsilon u, \delta_\varepsilon v)
\]
for all $u,v \in \mathbb{R}^N$ uniform in a neighborhood
of the origin $V \subset \Phi^{-1}(U)$.

$2.$ The mapping $\widehat \rho(u,v)$ is a continuous
$\delta_\varepsilon$-homogeneous quasimetric on~$\mathbb{R}^N$.

$3.$ The map $L$ is a $\delta_\varepsilon$-homogeneous
isometry of quasimetric spaces\footnote{isoquasimetry?}
$(\mathbb{R}^N, \widehat d)$ and
$(\mathbb{R}^N, \widehat \rho)$, i.\,e.
\[
  \delta_\varepsilon L(x) = L(\delta_\varepsilon x),
  \quad
  \widehat \rho(x,y) = \widehat d(L(x), L(y))
\]
for all $x, y \in \mathbb{R}^N$.
\end{theorem}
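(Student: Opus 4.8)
The plan is to transfer the convergence from $d$ to $\rho$ through the map $L$ by exploiting the uniform convergence $\delta_\varepsilon^{-1}\circ\Phi\circ\delta_\varepsilon \to L$ together with the uniform continuity of $d$ on a fixed box. First I would fix a radius $r_0$ so that $\overline{\mathrm{Box}}(r_0)\subset U$ and, by Lemma~\ref{Lemma:Condition1}, so that also $\Phi_\varepsilon(x):=\delta_\varepsilon^{-1}\circ\Phi\circ\delta_\varepsilon(x)\to L(x)$ uniformly on $\mathrm{Box}(r_0)$ and the analogous statement holds for $\Phi^{-1}$, giving the two-sided box estimates $\tfrac1{2m}\|x\|\le\|\Phi(x)\|\le 2M\|x\|$. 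These estimates let me choose a smaller box $\mathrm{Box}(r_1)$ on which all the compositions below stay inside $\mathrm{Box}(r_0)$, so every uniform statement is valid simultaneously.

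The key computation is the following chain: for $u,v\in\mathrm{Box}(r_1)$,
\[
  \tfrac1\varepsilon\rho(\delta_\varepsilon u,\delta_\varepsilon v)
  = \tfrac1\varepsilon d\bigl(\Phi(\delta_\varepsilon u),\Phi(\delta_\varepsilon v)\bigr)
  = \tfrac1\varepsilon d\bigl(\delta_\varepsilon\Phi_\varepsilon(u),\delta_\varepsilon\Phi_\varepsilon(v)\bigr).
\]
Now I split the difference from $\widehat d(L(u),L(v))$ into two parts. By the homogeneous approximation hypothesis for $d$, $\tfrac1\varepsilon d(\delta_\varepsilon a,\delta_\varepsilon b)\to\widehat d(a,b)$ uniformly for $a,b$ in a box; applying this with $a=\Phi_\varepsilon(u)$, $b=\Phi_\varepsilon(v)$ (which lie in $\mathrm{Box}(r_0)$) shows that $\tfrac1\varepsilon d(\delta_\varepsilon\Phi_\varepsilon(u),\delta_\varepsilon\Phi_\varepsilon(v)) - \widehat d(\Phi_\varepsilon(u),\Phi_\varepsilon(v))\to 0$ uniformly. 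Then, since $\Phi_\varepsilon(u)\to L(u)$ uniformly and $\widehat d$ is continuous, hence uniformly continuous on $\overline{\mathrm{Box}}(r_0)\times\overline{\mathrm{Box}}(r_0)$, we get $\widehat d(\Phi_\varepsilon(u),\Phi_\varepsilon(v))\to\widehat d(L(u),L(v))$ uniformly. Combining, $\tfrac1\varepsilon\rho(\delta_\varepsilon u,\delta_\varepsilon v)\to\widehat d(L(u),L(v))=:\widehat\rho(u,v)$ uniformly on $\mathrm{Box}(r_1)$, which proves statement $1$ and simultaneously the isometry formula in statement $3$ (the intertwining $\delta_\varepsilon L=L\delta_\varepsilon$ is already part of the hypothesis / Lemma~\ref{Lemma:Condition1}).

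For statement $2$ I would verify the quasimetric axioms for $\widehat\rho(u,v)=\widehat d(L(u),L(v))$: continuity is immediate from continuity of $L$ and $\widehat d$; nonnegativity and $\widehat\rho(u,v)=0\iff u=v$ follow because $L$ is a bijection and $\widehat d$ separates points; the quasi-symmetry and quasi-triangle constants for $\widehat\rho$ are inherited verbatim from those of $\widehat d$ since $L$ is applied to every argument; $\delta_\varepsilon$-homogeneity of $\widehat\rho$ follows from $\widehat\rho(\delta_\varepsilon u,\delta_\varepsilon v)=\widehat d(L\delta_\varepsilon u,L\delta_\varepsilon v)=\widehat d(\delta_\varepsilon L(u),\delta_\varepsilon L(v))=\varepsilon\,\widehat d(L(u),L(v))=\varepsilon\,\widehat\rho(u,v)$. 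Since $L$ is a homeomorphism with $\delta_\varepsilon$-homogeneous inverse (Lemma~\ref{Lemma:Condition1}), it is by construction a $\delta_\varepsilon$-homogeneous isometry between $(\mathbb R^N,\widehat\rho)$ and $(\mathbb R^N,\widehat d)$, completing statement $3$.

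The main obstacle is making the ``uniformly in a neighborhood of the origin'' bookkeeping airtight: one must choose the nested radii $r_1\le r_0$ so that $\Phi_\varepsilon(\mathrm{Box}(r_1))\subset\mathrm{Box}(r_0)$ for all small $\varepsilon$ (using the box estimate $\|\Phi_\varepsilon(u)\|\le 2M\|u\|$ from Lemma~\ref{Lemma:Condition1}), and likewise ensure the $\widehat d$-approximation of $d$ holds on $\mathrm{Box}(r_0)$; the two limit passages — the $\varepsilon$-limit of $\tfrac1\varepsilon d(\delta_\varepsilon\cdot,\delta_\varepsilon\cdot)$ and the convergence $\Phi_\varepsilon\to L$ — must be combined through an $\varepsilon/2$ argument using uniform continuity of $\widehat d$ on a compact box, which is where a careless proof would implicitly assume a limit that is only pointwise.
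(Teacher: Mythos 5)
Your proposal is correct and follows essentially the same route as the paper: writing $\tfrac1\varepsilon\rho(\delta_\varepsilon u,\delta_\varepsilon v)=\tfrac1\varepsilon d(\delta_\varepsilon\Phi_\varepsilon(u),\delta_\varepsilon\Phi_\varepsilon(v))$, applying the uniform homogeneous approximation of $d$ at the points $\Phi_\varepsilon(u),\Phi_\varepsilon(v)$, and then passing $\Phi_\varepsilon\to L$ through the (uniform) continuity of $\widehat d$ to get $\widehat\rho=\widehat d(L(\cdot),L(\cdot))$, with the quasimetric axioms and homogeneity inherited via $L$. The only difference is that you spell out the nested-box and uniform-continuity bookkeeping that the paper compresses into its $o(1)$ notation.
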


\begin{proof}
Indeed, let
$\Phi_\varepsilon = \delta^{-1}_\varepsilon \circ
\Phi \circ \delta_\varepsilon$.
Then
\[
  \rho(\delta_\varepsilon u, \delta_\varepsilon v)
  = d \bigl(
    \Phi(\delta_\varepsilon u), \Phi(\delta_\varepsilon v)
  \bigr)
  = d \bigl(
    \delta_\varepsilon \circ \Phi_\varepsilon (u),
    \delta_\varepsilon \circ \Phi_\varepsilon (v)
  \bigr).
\]
From Lemma~\ref{Lemma:Condition1} derive
\begin{multline*}
  \frac{1}{\varepsilon}
  \rho(\delta_\varepsilon u, \delta_\varepsilon v)
  = \frac{1}{\varepsilon} d \bigl(
    \delta_\varepsilon \circ \Phi_\varepsilon (u),
    \delta_\varepsilon \circ \Phi_\varepsilon (v)
  \bigr)
  = \widehat{d} \bigl(
    \Phi_\varepsilon(u), \Phi_\varepsilon(v)
  \bigr) + o(1) \\
  = \widehat{d} \bigl(
    L(u) + o(1), L(v) + o(1) \bigr) + o(1),
\end{multline*}
where all $o(1)$ are uniform in $u,v$ in a neighborhood
of the origin. Thus,
\[
  \widehat{\rho}(u,v)
  := \lim_{\varepsilon \to 0} \frac{1}{\varepsilon}
  \rho(\delta_\varepsilon u, \delta_\varepsilon v)
  = \widehat{d} \bigl( L(u), L(v) \bigr).
\]
Since $L$ is a homeomorphism, $\widehat{\rho}$
is also a quasimetric on $\mathbb{R}^N$. Besides,
\[
  \widehat{\rho}(\delta_t u, \delta_t v)
  = \widehat{d} \bigl( L(\delta_t u), L(\delta_t v) \bigr)
  = \widehat{d} \bigl( \delta_t \circ L(u),
     \delta_t \circ L(v) \bigr)
  = t \, \widehat{d} \bigl( L(u), L(v) \bigr)
  = t \, \widehat{\rho}(u, v)
\]
for all $t > 0$ and $u, v \in \mathbb{R}^N$.
This ends the proof.
\end{proof}

\begin{remark}
Note, that conditions of Theorem~\ref{Th:Condition1} are
not necessary. It may happen that the required limit
is non-uniform but still generates an isomorphism.
Moreover, we can provide an example of the map $\Phi$ such
that the limit~\eqref{Eq:NonSmoothCond1} does not exist
but quasimetrics still converge in a new coordinate system.
Consider $\mathbb{C}$ with the Euclidean metric
$d(z,w) = |z - w|$ and homothetic dilatation
$\delta_\varepsilon(z) = \varepsilon z$, $\varepsilon > 0$.

Let the map $\Phi : \mathbb{C} \to \mathbb{C}$
be defined as
\[
  \Phi(r e^{i\theta}) = r \, e^{i(\theta + \ln r)},
  \quad
  \Phi(0) = 0,
\]
where $\theta \in [0, 2 \pi]$. Then $\Phi$ is continuous
since $\Phi(r e^{0i}) = \Phi(r e^{2 \pi i})$ and
$\Phi(r e^{i\theta}) \to 0$ as $r \to 0$.
The metric $d(z, w) = |\Phi(z) - \Phi(w)|$
is homogeneous:
\begin{multline*}
  \frac{1}{\varepsilon} \bigl|
    \Phi(\varepsilon r_1 \, e^{i \theta_1}) -
    \Phi(\varepsilon r_2 \, e^{i \theta_2})
  \bigr|
  = \frac{1}{\varepsilon} \bigl|
    \varepsilon r_1 \,
      e^{i \theta_1 + i \ln(\varepsilon r_1)}
    - \varepsilon r_2 \,
      e^{i\theta_2 + i \ln(\varepsilon r_2)}
  \bigr| \\
  = |e^{i \ln \varepsilon}| \bigl|
    r_1 \, e^{i\theta_1 + i \ln r_1}
    -r_2 \, e^{i\theta_2 + i \ln r_2}
  \bigr|
  = \bigl| \Phi(r_1 \, e^{i \theta_1})
  - \Phi(r_2 \, e^{i \theta_2}) \bigr|.
\end{multline*}
However, the expression~\eqref{Eq:NonSmoothCond1} for this
map becomes
\[
  \frac{1}{\varepsilon} \Phi(\varepsilon z)
  = \frac{1}{\varepsilon} \Phi(\varepsilon r \, e^{i \theta})
  = r \, e^{i \theta + i \ln (\varepsilon r)}
  = r \, e^{i \theta + i \ln r} e^{i \ln \varepsilon}
  = \Phi(z) e^{i \ln \varepsilon}
\]
and diverges as $\varepsilon \to 0$ and $z \ne 0$. Note, that
metrics $|z - w|$ and $|\Phi(z) - \Phi(w)|$ are not
isometric.
\end{remark}

Let us apply the results of this section to
Carnot--Carath\'{e}odory spaces.
Let $\mathbb{M}$ be $C^1$-smooth equiregular space.
In a neighborhood $U$ of $p \in \mathbb{M}$ choose a basis 
$X_1, \dots, X_N$ subordinate to the filtration 
\eqref{Eq:NonSmoothFiltration}.
Recall, that using the canonical 1st kind coordinates
\[
  \theta_x(u_1, \ldots, u_N) =
  \exp (u_1 X_1 + \ldots + u_N X_N)(x),
  \quad x \in U
\]
we define the quasimetric
$d_\infty(x, y) = \max\limits_{k = 1, \dots, N}
|u_k|^{\frac{1}{\sigma_k}}$
and the family of dilatations
\[
  \Delta^p_\varepsilon : \theta_p(u_1, \dots, u_N)
  \mapsto \theta_p (\varepsilon^{\sigma_1} u_1,
  \dots, \varepsilon^{\sigma_N} u_N).
\]

From Theorem~\ref{Th:KarmVod} and Theorem~\ref{Th:Condition1}
follows

\begin{corollary}
\label{Corollary:CC1}
Let $\mathbb{M}$ be $C^1$-smooth Carnot--Carath\'{e}odory
space, $X_1, \dots, X_N$ be a basis in a neighborhood of
$p \in \mathbb{M}$, subordinate to 
\eqref{Eq:NonSmoothFiltration}, $\theta_p$ be the canonical
1st kind coordinates~\eqref{Eq:1KindCoord} and
$\phi_p : U \subset \mathbb{R}^N \to \mathbb{M}$ be
homeomorphism of a neighborhood of the origin to a
neighborhood of $p$. Define a family of dilatations
\[
  \widetilde \Delta^p_\varepsilon :
  \phi_p(x_1, \dots, x_N) \mapsto
  \phi_p(\varepsilon^{\sigma_1} x_1, \dots,
  \varepsilon^{\sigma_N} x_N).
\]

If there is a uniform limit
\[
  L(x) := \lim_{\varepsilon \to 0}
  \delta_\varepsilon^{-1}
  \circ \phi_p^{-1} \circ \theta_p \circ 
  \delta_\varepsilon(x),
\]
and $L$ is a homeomorphism then there is a limit
\[
  \widetilde d_\infty(x, y) = \lim_{\varepsilon \to 0}
  \frac{1}{\varepsilon} d_\infty(
    \widetilde \Delta^p_\varepsilon x,
    \widetilde \Delta^p_\varepsilon y
  )
\]
uniform in a neighborhood of the origin and
$\widetilde d_\infty$ is a
$\widetilde \Delta^p_\varepsilon$-homogeneous quasimetric,
isometric to $\widehat d_\infty$. The isometry is given by the
map $\mathcal{L} = \phi_p \circ L \circ \theta_p^{-1}$:
$\widehat d_\infty(x, y) =
\widetilde d_\infty(\mathcal{L} x, \mathcal{L} y)$.
\end{corollary}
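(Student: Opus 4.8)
The plan is to transport the whole situation into the canonical $1$st kind coordinates $\theta_p$, where a homogeneous approximation of $d_\infty$ is already guaranteed by Theorem~\ref{Th:KarmVod}, and then feed the transition map into the abstract Theorem~\ref{Th:Condition1}. Concretely, I would first introduce the pull-back $d^\theta_\infty(a,b):=d_\infty(\theta_p(a),\theta_p(b))$, defined for $a,b$ in a neighbourhood of $0\in\mathbb R^N$, and check that it admits the $\delta_\varepsilon$-homogeneous approximation $\widehat d'_\infty$ --- the quasimetric built out of the nilpotentized fields $\widehat X'_k$ exactly as $d_\infty$ is built out of the $X_k$, as in Theorem~\ref{Th:KarmVod}.3. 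The only thing to verify here is that the estimate $|d_\infty(x,y)-\widehat d_\infty(x,y)|=o(\varepsilon)$ for $x,y\in\mathrm{Box}(p,\varepsilon)$ rewrites, in the $\theta_p$-chart, as the uniform limit $\tfrac1\varepsilon d^\theta_\infty(\delta_\varepsilon a,\delta_\varepsilon b)\to\widehat d'_\infty(a,b)$ demanded by our definition of a homogeneous approximation: substituting $a\mapsto\delta_\varepsilon a$, $b\mapsto\delta_\varepsilon b$ with $a,b\in\mathrm{Box}(r_0)$ and $r_0$ small enough keeps the corresponding points of $\mathbb M$ inside $\mathrm{Box}(p,\varepsilon)$, while the $\delta_\varepsilon$-homogeneity of $\widehat d'_\infty$ --- a consequence of the graded structure furnished by Theorem~\ref{Th:KarmVod}.1, the fields $\widehat X'_k$ being $\delta_\varepsilon$-homogeneous of degree $\sigma_k$ --- converts the subtracted term into the factor $\varepsilon$.

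Next I would look at the transition map $\Phi:=\phi_p^{-1}\circ\theta_p$. It is a homeomorphism near the origin, being a composition of homeomorphisms, and the hypothesis of the corollary is precisely that $L=\lim_{\varepsilon\to0}\delta_\varepsilon^{-1}\circ\Phi\circ\delta_\varepsilon$ exists uniformly near $0$ with $L$ a homeomorphism. The point to notice is which map Theorem~\ref{Th:Condition1} should be applied to: the quasimetric we ultimately want is the pull-back of $d_\infty$ by $\phi_p$, namely $\rho(a,b):=d_\infty(\phi_p(a),\phi_p(b))$, and since $\phi_p=\theta_p\circ\Phi^{-1}$ we have $\rho(a,b)=d^\theta_\infty(\Phi^{-1}(a),\Phi^{-1}(b))$. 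Hence Theorem~\ref{Th:Condition1} must be invoked with $d=d^\theta_\infty$, $\widehat d=\widehat d'_\infty$ and the transition homeomorphism $\Phi^{-1}$ --- not $\Phi$. Its conjugated limit $\lim_{\varepsilon\to0}\delta_\varepsilon^{-1}\circ\Phi^{-1}\circ\delta_\varepsilon=L^{-1}$ exists uniformly near $0$ and $L^{-1}$ is a $\delta_\varepsilon$-homogeneous homeomorphism by the final assertion of Lemma~\ref{Lemma:Condition1}, whose hypotheses (``$\Phi$ and $L$ are homeomorphisms'') hold here by assumption.

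Now Theorem~\ref{Th:Condition1}, applied as above, yields in one stroke: the limit $\widehat\rho(a,b):=\lim_{\varepsilon\to0}\tfrac1\varepsilon\rho(\delta_\varepsilon a,\delta_\varepsilon b)$ exists uniformly in a neighbourhood of the origin; $\widehat\rho$ is a continuous $\delta_\varepsilon$-homogeneous quasimetric on $\mathbb R^N$; and $L^{-1}$ is a $\delta_\varepsilon$-homogeneous isometry of $(\mathbb R^N,\widehat d'_\infty)$ onto $(\mathbb R^N,\widehat\rho)$, i.e.\ $\widehat\rho(L(a),L(b))=\widehat d'_\infty(a,b)$. It only remains to push this forward by $\phi_p$. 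Because $\widetilde\Delta^p_\varepsilon=\phi_p\circ\delta_\varepsilon\circ\phi_p^{-1}$, for $x,y$ near $p$ and $a=\phi_p^{-1}(x)$, $b=\phi_p^{-1}(y)$ one has $d_\infty(\widetilde\Delta^p_\varepsilon x,\widetilde\Delta^p_\varepsilon y)=\rho(\delta_\varepsilon a,\delta_\varepsilon b)$, so $\widetilde d_\infty(x,y):=\widehat\rho(\phi_p^{-1}(x),\phi_p^{-1}(y))$ is exactly the claimed uniform limit and is $\widetilde\Delta^p_\varepsilon$-homogeneous since $\widehat\rho$ is $\delta_\varepsilon$-homogeneous. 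Finally, with $a'=\theta_p^{-1}(x)$, $b'=\theta_p^{-1}(y)$ and $\widehat d_\infty(x,y)=\widehat d'_\infty(a',b')$, the isometry relation gives $\widehat d_\infty(x,y)=\widehat\rho(L(a'),L(b'))=\widetilde d_\infty\bigl(\phi_p(L(a')),\phi_p(L(b'))\bigr)=\widetilde d_\infty(\mathcal L x,\mathcal L y)$ with $\mathcal L=\phi_p\circ L\circ\theta_p^{-1}$, as asserted.

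Beyond citing Theorems~\ref{Th:KarmVod} and~\ref{Th:Condition1} and Lemma~\ref{Lemma:Condition1}, the proof contains only the two bookkeeping observations above, and these are where I would expect the only difficulty: (i) matching the ``$o(\varepsilon)$ on $\mathrm{Box}(p,\varepsilon)$'' form of Theorem~\ref{Th:KarmVod}.3 with the dilated-limit form used in Section~\ref{Sec:Homogeneous}, and (ii) getting the direction of the transition map right, so that Theorem~\ref{Th:Condition1} is applied to $\Phi^{-1}$ and the isometry comes out as $\mathcal L=\phi_p\circ L\circ\theta_p^{-1}$ rather than its inverse; this is exactly why the inverse-limit clause of Lemma~\ref{Lemma:Condition1} is needed.
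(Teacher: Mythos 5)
Your proposal is correct and follows exactly the route the paper intends: the paper states Corollary~\ref{Corollary:CC1} as an immediate consequence of Theorem~\ref{Th:KarmVod} and Theorem~\ref{Th:Condition1}, and your write-up just fills in that derivation, including the right bookkeeping (applying Theorem~\ref{Th:Condition1} to $\theta_p^{-1}\circ\phi_p$ via the inverse-limit clause of Lemma~\ref{Lemma:Condition1}, and matching the $o(\varepsilon)$ form of Theorem~\ref{Th:KarmVod} with the dilated-limit definition).
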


\begin{remark}
\label{Rem:Sin2}
If in the previous corollary $\mathcal{L}$ is a
$C^1$-diffeomorphism, we can define vector fields
$\widetilde X_j = \mathcal{L}_* \widehat X_j$.
These vector fields are homogeneous w.r.t.\ the
dilatation in new coordinates, but in general they can not
be obtained as homogeneous limits of vector fields
$\Phi_* X_j$ if $\Phi$ does not have enough regularity.
Consider, e.g., $\mathbb{R}^2_{x,y}$ with the family
of vector fields
$\{ \frac{\partial}{\partial_x}, \frac{\partial}{\partial y} \}$, the dilatation
$\delta_\varepsilon(x,y) = (\varepsilon x, \varepsilon^2 y)$
and the transition map
$\Phi(x,y) = (x, y + f(x))$ where
\[
  f(x) =
  \begin{cases}
    x^3 \sin \frac{1}{x}, & x \ne 0, \\
    0, & x = 0.
  \end{cases}
\]
Then $\Phi$ is a $C^{1,1}$-diffeomorphism of a neighborhood
of the origin and
\[
  \delta_\varepsilon^{-1} \circ \Phi \circ \delta_\varepsilon
  (x,y)
  = \begin{pmatrix}
    x \\
    y + \varepsilon x^3 \sin \frac{1}{\varepsilon x}
  \end{pmatrix}
  \to
  \begin{pmatrix} x \\ y \end{pmatrix}
\]
as $\varepsilon \to 0$. However,
$
  \Phi_* \frac{\partial}{\partial x}
  = \frac{\partial}{\partial x}
  + (3x^2 \sin \frac{1}{x} - x \cos \frac{1}{x})
    \frac{\partial}{\partial y}
$
and the expression
\[
  (\delta_\varepsilon)^{-1}_* \varepsilon \Phi_*
  \frac{\partial}{\partial x}(\varepsilon x, \varepsilon^2 y)
  = \frac{\partial}{\partial x} +
  \Big(
    3 \varepsilon x^2 \sin \frac{1}{\varepsilon x} -
    x \cos \frac{1}{\varepsilon x}
  \Big)
  \frac{\partial}{\partial y}
\]
diverges as $\varepsilon \to 0$ if $x \ne 0$.
\end{remark}

\section{Homogeneous approximation of vector fields}
\label{Sec:Nilpotent}

In this section we propose sufficient condition on the
transition map $\Phi$ which preserves the homogeneous
approximations of basis vector fields of a
Carnot--Carath\'{e}odory space in new coordinates.

\begin{definition}
Let the dilatation $\delta_\varepsilon$ be defined in
$U \subset \mathbb{R}^N$. We say that continuous vector field
$X$ on $U$ has $\delta_\varepsilon$-homogeneous approximation
of degree $r$ if there is a limit
\begin{equation}
\label{Eq:FieldLimit}
  \widehat X(x) := \lim_{\varepsilon \to 0}
  \, (\delta_\varepsilon^{-1})_* \,
  \varepsilon^r X(\delta_\varepsilon x)
\end{equation}
uniform in the neighborhood of the origin. Note, that
the vector field~$\widehat X$ is
$\delta_\varepsilon$-homogeneous of degree $r$.
\end{definition}

\begin{remark}
It is clear that if the limit~\eqref{Eq:FieldLimit}
exists for some $r$ then for all $r' > r$ this limit vanishes.
Therefore, it makes sense to assign a formal degree to
a vector field as the infimum of $r$ such that the 
limit~\eqref{Eq:FieldLimit} is zero. For basis vector fields
of the Carnot--Carath\'{e}odory space this definition of
a degree coinsides with the previous one.
\end{remark}

\begin{lemma}
\label{Lemma:Condition2}
Let $\Phi \in C^1(U, \mathbb{R}^N)$.
The limit
\[
  \lambda(x) := \lim_{\varepsilon \to 0}
  D\delta^{-1}_\varepsilon \circ D\Phi \circ
  D\delta_\varepsilon(x)
\]
uniform in a neighborhood of the origin exists if and only if
for all $k, l \in \{ 1, \dots, N \}$ such that
$\sigma_k > \sigma_l$ there are continuous functions
$\lambda_{kl} : \mathbb{R}^N \to \mathbb{R}$
$\delta_\varepsilon$-homogeneous of degrees
$\sigma_k - \sigma_l$ respectively and such that
\begin{equation}
\label{Eq:Condition2}
  \frac{\partial \Phi_k}{\partial x_l}(x)
  = \lambda_{kl}(x) + o(\varepsilon^{\sigma_k - \sigma_l})
\end{equation}
as $\varepsilon \to 0$ and all $o(\cdot)$ are uniform
$x \in \mathrm{Box}(\varepsilon)$.

Under conditions of this lemma if also $\Phi(0) = 0$
then there is a uniform limit
\[
   L(x) := \lim_{\varepsilon \to 0}
   \delta_\varepsilon^{-1} \circ \Phi
     \circ \delta_\varepsilon(x)
\]
and $\lambda = DL$.
\end{lemma}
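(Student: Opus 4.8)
\textbf{Proof proposal for Lemma~\ref{Lemma:Condition2}.}
The plan is to mirror the structure of the proof of Lemma~\ref{Lemma:Condition1}, working entrywise. For a $C^1$ map, $(D\delta_\varepsilon^{-1}\circ D\Phi\circ D\delta_\varepsilon(x))_{kl}$ is just $\varepsilon^{\sigma_l-\sigma_k}\,\frac{\partial\Phi_k}{\partial x_l}(\delta_\varepsilon x)$, so the claimed limit is the componentwise statement that $\varepsilon^{\sigma_l-\sigma_k}\,\partial_l\Phi_k(\delta_\varepsilon x)$ converges uniformly on a fixed $\mathrm{Box}(r_0)$. First I would assume the uniform limit $\lambda$ exists and show each entry $\lambda_{kl}$ is $\delta_\varepsilon$-homogeneous of degree $\sigma_k-\sigma_l$: the same rescaling trick as in~\eqref{Eq:LMustBeHomo}, namely $\lambda_{kl}(\delta_t x)=\lim_{\varepsilon\to0}\varepsilon^{\sigma_l-\sigma_k}\partial_l\Phi_k(\delta_{t\varepsilon}x)=\lim_{\eta\to0}t^{\sigma_k-\sigma_l}\eta^{\sigma_l-\sigma_k}\partial_l\Phi_k(\delta_\eta x)=t^{\sigma_k-\sigma_l}\lambda_{kl}(x)$, gives homogeneity and lets us extend $\lambda_{kl}$ to all of $\mathbb{R}^N$ by homogeneity, keeping continuity. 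Note for $\sigma_k\le\sigma_l$ the prefactor $\varepsilon^{\sigma_l-\sigma_k}$ does not blow up, and in fact for $\sigma_k<\sigma_l$ it forces $\lambda_{kl}\equiv0$; for $\sigma_k=\sigma_l$ the entry is $\partial_l\Phi_k(0)$ (a constant, trivially homogeneous of degree $0$). So the only content is in the entries with $\sigma_k>\sigma_l$, which is exactly what the lemma isolates. Rewriting the convergence as $\partial_l\Phi_k(\delta_\varepsilon x)=\lambda_{kl}(\delta_\varepsilon x)+o(\varepsilon^{\sigma_k-\sigma_l})$ uniformly on $\mathrm{Box}(r_0)$ — which by $\delta_\varepsilon\mathrm{Box}(r_0)=\mathrm{Box}(\varepsilon r_0)$ is precisely~\eqref{Eq:Condition2} — completes one direction. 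The converse is the same computation read backwards: dividing~\eqref{Eq:Condition2} by $\varepsilon^{\sigma_k-\sigma_l}$ and using homogeneity of $\lambda_{kl}$ gives $\varepsilon^{\sigma_l-\sigma_k}\partial_l\Phi_k(\delta_\varepsilon x)=\lambda_{kl}(x)+o(1)$ uniformly, which reassembles into the matrix limit.

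For the final assertion, I would first produce the uniform limit $L$. Fix $r_0$ with $\overline{\mathrm{Box}}(r_0)\subset U$. Writing $\Phi_k(\delta_\varepsilon x)=\Phi_k(0)+\int_0^1 \frac{d}{dt}\Phi_k(t\,\delta_\varepsilon x)\,dt=\int_0^1\sum_l \varepsilon^{\sigma_l}x_l\,\partial_l\Phi_k(t\,\delta_\varepsilon x)\,dt$ (using $\Phi(0)=0$), and substituting $\partial_l\Phi_k(t\delta_\varepsilon x)=\partial_l\Phi_k(\delta_\varepsilon(t^{\sigma_l^{-1}}\!\cdot)\dots)$ — more cleanly, just use $\partial_l\Phi_k(s y)$ with $s\in[0,1]$ and the already-established uniform estimate $\partial_l\Phi_k(z)=\lambda_{kl}(z)+o(\|z\|^{\sigma_k-\sigma_l})$ valid on a neighborhood of $0$ — one gets $\varepsilon^{-\sigma_k}\Phi_k(\delta_\varepsilon x)=\int_0^1\sum_l\varepsilon^{\sigma_l-\sigma_k}x_l\big(\lambda_{kl}(t\delta_\varepsilon x)+o(\varepsilon^{\sigma_k-\sigma_l})\big)dt$. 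By homogeneity $\lambda_{kl}(t\delta_\varepsilon x)=\varepsilon^{\sigma_k-\sigma_l}\lambda_{kl}(t^{\square}\dots)$; the cleanest route is to note $\varepsilon^{\sigma_l-\sigma_k}\lambda_{kl}(t\delta_\varepsilon x)$ is a bounded function of $(t,x)$ converging — again by homogeneity, writing $t\delta_\varepsilon x = \delta_\varepsilon(\text{something})$ only works if $t$ is folded into each coordinate with its own weight, so instead bound $|\lambda_{kl}(t\delta_\varepsilon x)|\le M\|t\delta_\varepsilon x\|^{\sigma_k-\sigma_l}\le M(\varepsilon r_0)^{\sigma_k-\sigma_l}$ and identify the pointwise limit via dominated convergence on $t\in[0,1]$. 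Define $L_k(x)=\int_0^1\sum_{l:\sigma_l\le\sigma_k} x_l\cdot(\text{limit of }\varepsilon^{\sigma_l-\sigma_k}\lambda_{kl}(t\delta_\varepsilon x))\,dt$; the dominated-convergence bound makes this limit uniform on $\mathrm{Box}(r_0)$, so $L(x)=\lim_{\varepsilon\to0}\delta_\varepsilon^{-1}\Phi\delta_\varepsilon(x)$ exists uniformly. Finally, $L$ is $C^1$ on a neighborhood of $0$ (differentiate under the integral, or observe $\delta_\varepsilon^{-1}\Phi\delta_\varepsilon$ are $C^1$ with uniformly convergent derivatives by the first part, so the limit is $C^1$ with $DL=\lambda$), which gives $DL=\lambda$.

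The main obstacle I anticipate is the bookkeeping around the ``off-diagonal blow-up'' direction: entries $\lambda_{kl}$ with $\sigma_k<\sigma_l$ carry a prefactor $\varepsilon^{\sigma_l-\sigma_k}\to0$, so one must argue they contribute nothing and in particular that the matrix limit $\lambda$ is \emph{block lower-triangular} with respect to the weight grading (only $\sigma_k\ge\sigma_l$ entries survive), matching the homogeneity structure of $L$; conversely the potentially singular prefactor $\varepsilon^{\sigma_l-\sigma_k}$ with $\sigma_k>\sigma_l$ is tamed precisely by hypothesis~\eqref{Eq:Condition2}. A secondary technical point is justifying interchange of limit and the integral $\int_0^1\cdots dt$ in recovering $L$ from $\lambda$: this is where uniformity of the $o(\cdot)$ on $\mathrm{Box}(\varepsilon)$ (equivalently on the rescaled box) is essential, and one should be careful that the substitution relating $\partial_l\Phi_k$ on the segment $[0,\delta_\varepsilon x]$ to values on $\mathrm{Box}(\varepsilon r_0)$ stays inside the box where the estimate holds — which it does, since $\|t\delta_\varepsilon x\|\le\|\delta_\varepsilon x\|<\varepsilon r_0$ for $t\in[0,1]$.
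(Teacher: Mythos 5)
Your proof is correct and, for the equivalence itself, follows the same route as the paper: the entrywise identity $(D\delta_\varepsilon^{-1}\circ D\Phi\circ D\delta_\varepsilon)_{kl}(x)=\varepsilon^{\sigma_l-\sigma_k}\,\partial_l\Phi_k(\delta_\varepsilon x)$, homogeneity of $\lambda_{kl}$ via the rescaling trick and extension by homogeneity, and the observation that the entries with $\sigma_k\le\sigma_l$ converge automatically (to $\partial_l\Phi_k(0)$ or $0$) by continuity of $D\Phi$, so that only the $\sigma_k>\sigma_l$ entries carry content. The only place you diverge is the final claim: the paper disposes of it in one line, noting $\Phi_\varepsilon(0)=0$ and invoking the classical theorem that a family of $C^1$ maps with a convergent value at one point and uniformly convergent derivatives converges uniformly to a $C^1$ limit with $DL=\lambda$ --- which is exactly your parenthetical alternative; your primary route instead reconstructs $L$ explicitly by the fundamental theorem of calculus along the segment $t\mapsto t\,\delta_\varepsilon x$. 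That route also works, and it has the small virtue of producing an explicit formula for $L$, but your dominated-convergence detour is unnecessary: since $t$ acts diagonally, $t\,\delta_\varepsilon x=\delta_\varepsilon(tx)$ exactly, so $\varepsilon^{\sigma_l-\sigma_k}\lambda_{kl}(t\,\delta_\varepsilon x)=\lambda_{kl}(tx)$ is independent of $\varepsilon$ and the integrand converges (uniformly, given the uniform $o(\cdot)$ on $\mathrm{Box}(\varepsilon r_0)$) with no further argument. Also note that the estimate you quote as $\partial_l\Phi_k(z)=\lambda_{kl}(z)+o(\Vert z\Vert^{\sigma_k-\sigma_l})$ should be read in the box form actually provided by~\eqref{Eq:Condition2} (error $o(\varepsilon^{\sigma_k-\sigma_l})$ uniformly over $\Vert z\Vert\le\varepsilon$), which is what your application on the segment uses and is sufficient there.
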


\begin{proof}
Since $D\delta_\varepsilon$ is a diagonal matrix with
$\varepsilon^{\sigma_1}, \dots, \varepsilon^{\sigma_N}$
on diagonal, we have
\[
  [D\delta_\varepsilon^{-1} \circ D\Phi
  \circ D\delta_\varepsilon]_{kl} (x)
  = \varepsilon^{\sigma_l - \sigma_k}
  \frac{\partial \Phi_k}{\partial x_l}(\delta_\varepsilon x).
\]

Let $V = \mathrm{Box}(r_0)$ and for all
$k, l \in \{ 1, \dots, N \}$ let us have the uniform limits
\[
  \lambda_{kl}(x)
  := \lim_{\varepsilon \to 0}
  \varepsilon^{\sigma_l - \sigma_k}
  \frac{\partial \Phi_k}{\partial x_l}(\delta_\varepsilon x),
  \quad
  x \in V.
\]
Then the functions $\lambda_{kl}$ are continuous and
$\lambda_{kl}(\delta_t x) = t^{\sigma_k - \sigma_l} 
\lambda_{kl}(x)$
for $x \in V$, $t \in (0, 1]$. We can extend the functions
$\lambda_{kl}$ by homogeneity to the functions defined
on $\mathbb{R}^N$.

Conversely, let the condition \eqref{Eq:Condition2} hold.
Then in the case $\sigma_l \geq \sigma_k$ we have
\[
  \varepsilon^{\sigma_l - \sigma_k}
  \frac{\partial \Phi_k}{\partial x_l}(\delta_\varepsilon x)
  \to \begin{cases}
    \frac{\partial \Phi_k}{\partial x_l}(0),
    & \sigma_l = \sigma_k, \\
    0, & \sigma_l > \sigma_k,
  \end{cases}
\]
as $\varepsilon \to 0$ uniformly in
a compact neighborhood of the origin. When
$\sigma_l < \sigma_k$ we have
\[
  \varepsilon^{\sigma_l - \sigma_k}
  \frac{\partial \Phi_k}{\partial x_l}(\delta_\varepsilon x)
  = \varepsilon^{\sigma_l - \sigma_k} \bigl(
    \lambda_{kl}(\delta_\varepsilon x)
    + o(\varepsilon^{\sigma_k - \sigma_l})
  \bigr)
  = \lambda_{kl}(x) + o(1)
\]
where $o(\cdot)$ is uniform in $x$.

If $\Phi(0) = 0$ then
$\delta_\varepsilon^{-1} \circ \Phi \circ
\delta_\varepsilon(0) = 0 = L(0)$. By the classical analysis
results on the uniform convergence of derivatives
there is a map $L : U \to \mathbb{R}^N$ in a neighborhood
of the origin $U$ such that $\lambda = DL$ and
$\delta_\varepsilon^{-1} \circ \Phi \circ
\delta_\varepsilon(x) \to L(x)$ as $\varepsilon \to 0$
uniformly in $U$. The lemma is proved.
\end{proof}

\begin{theorem}[Sufficient condition of approximation
of vector fields in new coordinates]
\label{Th:Condition2}
Let $X$ be a continuous vector field in a neighborhood of
the origin $U$ and for some $r > 0$ let there be a
uniform limit
\[
  \widehat X(x) = \lim_{\varepsilon \to 0}
  (\delta_\varepsilon^{-1})_* \, \varepsilon^{r}
  X (\delta_\varepsilon x).
\]

Let $\Phi : U \to \mathbb{R}^N$ be a $C^1$-diffeomorphism
on a neighborhood of the origin such that $\Phi(0) = 0$,
there is a limit
\[
  \lambda(x) := \lim_{\varepsilon \to 0}
  D\delta^{-1}_\varepsilon \circ D\Phi \circ
  D\delta_\varepsilon(x)
\]
uniform in a neighborhood of the origin,
and $\det \lambda(0) \ne 0$.
Denote $Y(y) = \Phi_* X (\Phi^{-1}(y))$.
Then there is a uniform limit
\[
  \widehat Y(y) = \lim_{\varepsilon \to 0}
  (\delta_\varepsilon^{-1})_* \, \varepsilon^{r}
  Y (\delta_\varepsilon y)
\]
and $\widehat Y(y) = L_* \widehat X(L^{-1}(y))$
where the map $L$ is defined by~\eqref{Eq:NonSmoothCond1}.
\end{theorem}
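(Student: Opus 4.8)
The plan is to compute the pushforward $Y = \Phi_* X \circ \Phi^{-1}$ explicitly in terms of the Jacobian $D\Phi$, then conjugate by $\delta_\varepsilon$ and pass to the limit, using the hypotheses on $\widehat X$, on $\lambda$, and on $L$ (from Lemma~\ref{Lemma:Condition2}). Concretely, for a $C^1$-diffeomorphism $\Phi$ one has $Y(y) = D\Phi(\Phi^{-1}(y)) \, X(\Phi^{-1}(y))$, so
\[
  (\delta_\varepsilon^{-1})_* \, \varepsilon^r Y(\delta_\varepsilon y)
  = \varepsilon^r D\delta_\varepsilon^{-1} \,
    D\Phi(\Phi^{-1}(\delta_\varepsilon y)) \,
    X(\Phi^{-1}(\delta_\varepsilon y)).
\]
The first move is to insert $I = D\delta_\varepsilon \, D\delta_\varepsilon^{-1}$ in the right place so as to recognize the conjugated Jacobian $D\delta_\varepsilon^{-1} \circ D\Phi \circ D\delta_\varepsilon$ appearing in the definition of $\lambda$; this forces us to evaluate that conjugated Jacobian at the point $\delta_\varepsilon^{-1} \Phi^{-1}(\delta_\varepsilon y) = \Phi_\varepsilon^{-1}(y)$, where $\Phi_\varepsilon = \delta_\varepsilon^{-1} \circ \Phi \circ \delta_\varepsilon$. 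By the last assertion of Lemma~\ref{Lemma:Condition1} (applicable since $\Phi$ and $L$ are homeomorphisms — here $L$ is a diffeomorphism because $\det\lambda(0)=\det DL(0)\ne 0$ and $L$ is $\delta_\varepsilon$-homogeneous, so the inverse function theorem plus homogeneity gives a global homeomorphism), $\Phi_\varepsilon^{-1}(y) \to L^{-1}(y)$ uniformly on a box.

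The second move is to handle the factor $\varepsilon^r D\delta_\varepsilon^{-1} \, X(\Phi^{-1}(\delta_\varepsilon y))$. Writing $\Phi^{-1}(\delta_\varepsilon y) = \delta_\varepsilon \Phi_\varepsilon^{-1}(y)$, this becomes $\varepsilon^r D\delta_\varepsilon^{-1} \, X(\delta_\varepsilon \Phi_\varepsilon^{-1}(y)) = \big[(\delta_\varepsilon^{-1})_* \varepsilon^r X(\delta_\varepsilon \cdot)\big]\big(\Phi_\varepsilon^{-1}(y)\big)$, which by the hypothesis on $X$ converges to $\widehat X$ — but evaluated at the moving point $\Phi_\varepsilon^{-1}(y)$, not at a fixed point. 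So I need a uniform-convergence-composed-with-uniformly-convergent-argument lemma: if $g_\varepsilon \to g$ uniformly on a box with $g$ continuous, and $a_\varepsilon \to a$ uniformly with values in that box, then $g_\varepsilon(a_\varepsilon) \to g(a)$ uniformly; this follows from $\|g_\varepsilon(a_\varepsilon) - g(a)\| \le \|g_\varepsilon(a_\varepsilon) - g(a_\varepsilon)\| + \|g(a_\varepsilon) - g(a)\|$, the first term going to zero by uniform convergence and the second by uniform continuity of $g$ on the closed box. Applying this with $g_\varepsilon = (\delta_\varepsilon^{-1})_*\varepsilon^r X(\delta_\varepsilon\cdot)$, $g = \widehat X$, $a_\varepsilon = \Phi_\varepsilon^{-1}(y)$, $a = L^{-1}(y)$ yields convergence of the $X$-factor to $\widehat X(L^{-1}(y))$.

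Combining the two moves, the conjugated-Jacobian factor $\big[D\delta_\varepsilon^{-1}\circ D\Phi\circ D\delta_\varepsilon\big](\Phi_\varepsilon^{-1}(y))$ converges to $\lambda(L^{-1}(y)) = DL(L^{-1}(y))$ by the same composition lemma (now for matrix-valued uniform limits), and the $X$-factor converges to $\widehat X(L^{-1}(y))$. Hence the product converges uniformly to $DL(L^{-1}(y))\,\widehat X(L^{-1}(y))$, which is precisely $L_*\widehat X (L^{-1}(y))$ by definition of pushforward. This gives both the existence of the limit $\widehat Y$ and the claimed formula $\widehat Y(y) = L_*\widehat X(L^{-1}(y))$; one should also remark that $\widehat Y$ is automatically $\delta_\varepsilon$-homogeneous of degree $r$, either directly from the limit or from $\delta_t$-equivariance of $L$. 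The main obstacle is the bookkeeping with moving evaluation points: getting the argument $\Phi_\varepsilon^{-1}(y)$ to land inside a fixed box on which all the uniform convergences hold, which requires choosing the neighborhood $V$ small enough and invoking the uniform estimate $\|\Phi_\varepsilon^{-1}(y)\| \le 2m\|y\|$ from the proof of Lemma~\ref{Lemma:Condition1}; once the nested boxes are set up correctly, the rest is the two applications of the elementary composition lemma.
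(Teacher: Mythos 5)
Your proposal is correct and follows essentially the same route as the paper: you rewrite the rescaled pushforward as the Jacobian of $\Phi_\varepsilon = \delta_\varepsilon^{-1}\circ\Phi\circ\delta_\varepsilon$ times the rescaled field $X$, both evaluated at the moving point $\Phi_\varepsilon^{-1}(y)$, and pass to the limit using Lemma~\ref{Lemma:Condition2} (giving $\lambda = DL$ and the diffeomorphism property of $L$) and the last assertion of Lemma~\ref{Lemma:Condition1} (giving $\Phi_\varepsilon^{-1}\to L^{-1}$ uniformly). The only difference is that you make explicit the elementary lemma on uniform limits composed with uniformly convergent arguments, which the paper leaves implicit in its final displayed computation.
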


\begin{proof}
By Lemma~\ref{Lemma:Condition2} in a neighborhood
of the origin there are uniform limits
\[
  L(x) = \lim_{\varepsilon \to 0}
  \delta_\varepsilon^{-1} \circ \Phi
    \circ \delta_\varepsilon(x),
  \quad
  L_*(x) = DL(x) = \lim_{\varepsilon \to 0}
  (\delta_\varepsilon^{-1} \circ \Phi
    \circ \delta_\varepsilon)_*(x).
\]
Since $\det DL(0) = \det \lambda(0) \ne 0$,
$L(x)$ is a diffeomorphism of neighborhoods of the origin.
By Lemma~\ref{Lemma:Condition1} in a neighborhood of the
origin there is a uniform limit
\[
  L^{-1}(y) = \lim_{\varepsilon \to 0}
  \delta_\varepsilon^{-1} \circ \Phi^{-1}
    \circ \delta_\varepsilon(y).
\]

Thus, in a neighborhood small enough
\begin{multline*}
  (\delta_\varepsilon^{-1})_* \, \varepsilon^{r}
  Y (\delta_\varepsilon y)
  = (\delta_\varepsilon^{-1})_* \, \varepsilon^{r}
  \Phi_* X (\Phi^{-1} (\delta_\varepsilon y)) \\
  = (\delta_\varepsilon^{-1} \circ \Phi
    \circ \delta_\varepsilon)_* (\delta_\varepsilon^{-1})_*
    \varepsilon^{r} X \circ \delta_\varepsilon
    (\delta_\varepsilon^{-1} \circ \Phi^{-1}
      \circ \delta_\varepsilon(y))
  \to L_* \widehat X (L^{-1}(y))
\end{multline*}
as $\varepsilon \to 0$ uniformly in $y$.
\end{proof}

From Theorem~\ref{Th:KarmVod} and Theorem~\ref{Th:Condition2}
immediately follows
\begin{corollary}
\label{Corollary:CC2}
Let $\mathbb{M}$ be $C^1$-smooth Carnot--Carath\'{e}odory
space, $X_1, \dots, X_N$ be a basis in a neighborhood of
$p \in \mathbb{M}$, subordinate 
to~\eqref{Eq:NonSmoothFiltration},
$\theta_p$ be canonical 1st kind 
coordinates~\eqref{Eq:1KindCoord} and
$\phi_p : U \subset \mathbb{R}^N \to \mathbb{M}$ be
$C^1$-diffeomorphism of a neighborhood of the origin on
a neighborhood of $p$. Define a family of dilatations
\[
  \widetilde \Delta^p_\varepsilon :
  \phi_p(x_1, \dots, x_N) \mapsto
  \phi_p(\varepsilon^{\sigma_1} x_1, \dots, 
  \varepsilon^{\sigma_N} x_N).
\]

If there is a limit
\[
  \lambda(x) := \lim_{\varepsilon \to 0}
  D\delta_\varepsilon^{-1}
  \circ D\phi_p^{-1} \circ D\theta_p \circ 
  D\delta_\varepsilon(x)
\]
uniform in a neighborhood of the origin and
$\det \lambda(0) \ne 0$ then

$1)$ In a neighborhood of the origin there are uniform limits
\[
  \widetilde X_k(x) = \lim_{\varepsilon \to 0}
  (\widetilde \Delta^p_\varepsilon)^{-1}_* \varepsilon^{d_k}
  X_k(\widetilde \Delta^p_\varepsilon x).
\]

$2)$ The conditions of Corollary~\ref{Corollary:CC1}
are fulfilled, the maps $L$ and $\mathcal{L}$ are
continuously differentiable and
$\widetilde X_k = \mathcal{L}_* \widehat X_k$.
\end{corollary}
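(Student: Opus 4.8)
The plan is to prove Corollary~\ref{Corollary:CC2} by reducing it to the abstract results of Sections~\ref{Sec:Homogeneous} and~\ref{Sec:Nilpotent}, applied to the transition map written in coordinates. First I would set $\Phi = \phi_p^{-1} \circ \theta_p$ on a neighborhood of $0 \in \mathbb{R}^N$. In the $C^1$-smooth setting $\theta_p$ is a $C^1$-diffeomorphism of a neighborhood of the origin onto a neighborhood of $p$ (as recalled after Theorem~\ref{Th:KarmVod}), and $\phi_p$ is a $C^1$-diffeomorphism by hypothesis, so $\Phi$ is a $C^1$-diffeomorphism of a neighborhood of the origin with $\Phi(0) = \phi_p^{-1}(p) = 0$. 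By the chain rule $D\Phi(x) = D\phi_p^{-1}(\theta_p(x)) \circ D\theta_p(x)$, so the assumed limit is exactly $\lambda(x) = \lim_{\varepsilon \to 0} D\delta_\varepsilon^{-1} \circ D\Phi \circ D\delta_\varepsilon(x)$, uniform near the origin, with $\det \lambda(0) \ne 0$. Hence $\Phi$ satisfies the hypotheses of Lemma~\ref{Lemma:Condition2} and of Theorem~\ref{Th:Condition2}; in particular Lemma~\ref{Lemma:Condition2} produces a uniform limit $L(x) = \lim_{\varepsilon \to 0} \delta_\varepsilon^{-1} \circ \Phi \circ \delta_\varepsilon(x)$ with $DL = \lambda$, and since $\det DL(0) \ne 0$ the inverse function theorem makes $L$ a $C^1$-diffeomorphism of neighborhoods of the origin, in particular a homeomorphism.

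Next I would apply Theorem~\ref{Th:Condition2} coordinatewise. Let $Z_k = (\theta_p^{-1})_* X_k$ be the coordinate representative of $X_k$ in the chart $\theta_p$; rewriting the limit of Theorem~\ref{Th:KarmVod}(2) in these coordinates (using $\Delta^p_\varepsilon = \theta_p \circ \delta_\varepsilon \circ \theta_p^{-1}$) shows that $Z_k$ has the $\delta_\varepsilon$-homogeneous approximation $\widehat X'_k(x) = \lim_{\varepsilon \to 0} (\delta_\varepsilon^{-1})_* \varepsilon^{\sigma_k} Z_k(\delta_\varepsilon x)$ of degree $\sigma_k$. The coordinate representative of $X_k$ in the chart $\phi_p$ is $Y_k := (\phi_p^{-1})_* X_k = \Phi_* Z_k$. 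Applying Theorem~\ref{Th:Condition2} with $X = Z_k$ and $r = \sigma_k$ then yields a uniform limit $\widehat Y_k(y) = \lim_{\varepsilon \to 0} (\delta_\varepsilon^{-1})_* \varepsilon^{\sigma_k} Y_k(\delta_\varepsilon y)$ together with the identity $\widehat Y_k(y) = L_* \widehat X'_k(L^{-1}(y))$.

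It remains to transport these statements back to $\mathbb{M}$. Since $\widetilde\Delta^p_\varepsilon = \phi_p \circ \delta_\varepsilon \circ \phi_p^{-1}$, a direct unwinding shows $(\widetilde\Delta^p_\varepsilon)^{-1}_* \varepsilon^{\sigma_k} X_k(\widetilde\Delta^p_\varepsilon \phi_p(y)) = (\phi_p)_* \bigl( (\delta_\varepsilon^{-1})_* \varepsilon^{\sigma_k} Y_k(\delta_\varepsilon y) \bigr)$, so pushing the previous limit forward by $\phi_p$ gives the uniform limit $\widetilde X_k := (\phi_p)_* \widehat Y_k$ of part~(1). For part~(2): $\phi_p$ is a homeomorphism and $L$, being a homeomorphism equal to $\lim_{\varepsilon \to 0} \delta_\varepsilon^{-1} \circ \phi_p^{-1} \circ \theta_p \circ \delta_\varepsilon$, verifies the hypotheses of Corollary~\ref{Corollary:CC1}, which therefore applies and furnishes $\widetilde d_\infty$ and the isometry $\mathcal{L} = \phi_p \circ L \circ \theta_p^{-1}$; since $\phi_p$, $\theta_p^{-1}$ and $L$ are all $C^1$, so are $L$ and $\mathcal{L}$. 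Finally, $\widehat X_k = (\theta_p)_* \widehat X'_k$ by Theorem~\ref{Th:KarmVod}, whence $\mathcal{L}_* \widehat X_k = (\phi_p)_* L_* (\theta_p^{-1})_* (\theta_p)_* \widehat X'_k = (\phi_p)_* L_* \widehat X'_k = (\phi_p)_* \widehat Y_k = \widetilde X_k$, which is the remaining assertion.

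The mathematical substance is supplied entirely by Theorems~\ref{Th:KarmVod} and~\ref{Th:Condition2}, so I do not expect any genuinely hard step; the only care needed is the bookkeeping of pushforwards under the two charts, the verification that the corollary's hypothesis (stated as $D\delta_\varepsilon^{-1} \circ D\phi_p^{-1} \circ D\theta_p \circ D\delta_\varepsilon$) coincides via the chain rule with the single-map hypothesis of Theorem~\ref{Th:Condition2} for $\Phi = \phi_p^{-1} \circ \theta_p$, and the standard but essential observation that in the $C^1$ setting $\theta_p$ is itself only a $C^1$-diffeomorphism --- precisely the regularity needed for $Z_k$ and $\mathcal{L}$ to be well defined and of class $C^1$.
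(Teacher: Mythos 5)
Your proposal is correct and follows essentially the same route as the paper, which states Corollary~\ref{Corollary:CC2} as an immediate consequence of Theorem~\ref{Th:KarmVod} and Theorem~\ref{Th:Condition2} (together with Corollary~\ref{Corollary:CC1}); you simply make explicit the transition map $\Phi = \phi_p^{-1}\circ\theta_p$, the chain-rule identification of the hypothesis, and the pushforward bookkeeping between the charts $\theta_p$ and $\phi_p$. The level of detail you supply (including $L$ being a local $C^1$-diffeomorphism via $\det\lambda(0)\ne 0$ and $\widetilde X_k = \mathcal{L}_*\widehat X_k$) matches what the paper's argument implicitly relies on, so there is nothing to add.
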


\section{Transition map and smoothness}
\label{Sec:Smooth}

In this section we derive bounds on smoothness of a
transition map $\Phi$ under which the necessary
condition~\eqref{Eq:HomoNecessary} becomes sufficient.

In the following we use the usual multiindex notations.
If $\alpha = (\alpha_1, \dots, \alpha_N)$ where
$\alpha_k$ are nonnegative integers, $k = 1, \dots, N$,
then denote
\[
  |\alpha| = \alpha_1 + \dots + \alpha_N,
  \quad
  \alpha ! = \alpha_1! \cdot \ldots \cdot \alpha_N!,
  \quad
  x^\alpha = x_1^{\alpha_1} \cdot \ldots \cdot x_N^{\alpha_N},
  \quad
  D^\alpha \Phi = \frac{\partial \Phi^{|\alpha|}}
    {\partial x_1^{\alpha_1} \dots \partial x_N^{\alpha_N}}.
\]
Introduce also the \emph{weight} of multiindex
$\sigma(\alpha) =
\alpha_1^{\sigma_1} + \ldots + \alpha_N^{\sigma_N}$.

\begin{lemma}
\label{Lemma:SmoothCond}
Let the map $\Phi : U \to \mathbb{R}^N$ be such that
$\Phi_k \in C^{\sigma_k}(U)$ for $k = 1, \dots, N$.
The following conditions are equivalent:

\begin{enumerate}
\item
There are constants $C > 0$ and
$\varepsilon_0 > 0$ such that
$\Phi(\mathrm{Box}(\varepsilon)) \subset
\mathrm{Box}(C \varepsilon)$
for all $0 < \varepsilon \leq \varepsilon_0$.

\item
For coordinate functions of the map $\Phi$ it holds
$\Phi_k(x) = O(\varepsilon^{\sigma_k})$
as $\varepsilon \to 0$ and
$x \in \mathrm{Box}(\varepsilon)$.

\item
$D^\alpha \Phi_k(0) = 0$ for all multiindices
$\alpha$ such that $\sigma(\alpha) < \sigma_k$.

\item
There are
uniform limits
\begin{align}
  \label{Eq:PhiLimit}
  L(x) & = \lim_{\varepsilon \to 0}
  \delta_\varepsilon^{-1} \circ \Phi
  \circ \delta_\varepsilon (x),
  \\
  \label{Eq:DPhiLimit}
  DL(x) & = \lim_{\varepsilon \to 0}
  D\delta_\varepsilon^{-1} \circ D\Phi
  \circ D\delta_\varepsilon (x).
\end{align}
\end{enumerate}

Under these conditions the coordinate functions
of the map $L$ are polynomials.
\end{lemma}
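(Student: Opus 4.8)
The plan is to prove $1 \Leftrightarrow 2$, then $2 \Leftrightarrow 3$, then $3 \Rightarrow 4$, and finally $4 \Rightarrow 2$; together these give the equivalence of all four conditions, and the polynomiality of the coordinate functions of $L$ will be produced explicitly inside the step $3 \Rightarrow 4$. Throughout I will use that each $\sigma_k$ is a positive integer (so $\Phi_k \in C^{\sigma_k}$ makes sense and Taylor's formula of order $\sigma_k$ applies), that $\sigma_j \ge 1$, and that $(\delta_\varepsilon x)^\alpha = \varepsilon^{\sigma(\alpha)} x^\alpha$, where $\sigma(\alpha)$ — the $\delta_\varepsilon$-weight of the monomial $x^\alpha$ — satisfies $\sigma(\alpha) \ge |\alpha|$ and $\sigma(\gamma + e_l) = \sigma(\gamma) + \sigma_l$.

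The equivalence $1 \Leftrightarrow 2$ is simply the definition of $\mathrm{Box}$: the inclusion $\Phi(\mathrm{Box}(\varepsilon)) \subset \mathrm{Box}(C\varepsilon)$ says $|\Phi_k(x)| < C^{\sigma_k}\varepsilon^{\sigma_k}$ for every $k$ and every $x \in \mathrm{Box}(\varepsilon)$, which is condition~2. For $2 \Leftrightarrow 3$ fix $r_0$ with $\overline{\mathrm{Box}}(r_0) \subset U$ and write Taylor's formula with Peano remainder uniformly on $\overline{\mathrm{Box}}(r_0)$; substituting $y = \delta_\varepsilon x$ gives
\[
  \varepsilon^{-\sigma_k}\Phi_k(\delta_\varepsilon x)
  = \sum_{|\alpha| \le \sigma_k} \frac{D^\alpha\Phi_k(0)}{\alpha!}
    \varepsilon^{\sigma(\alpha)-\sigma_k} x^\alpha + o(1),
  \qquad x \in \mathrm{Box}(r_0).
\]
If condition~3 holds, all terms with $\sigma(\alpha) < \sigma_k$ drop out and the rest stay bounded on a fixed box, so $\Phi_k(\delta_\varepsilon x) = O(\varepsilon^{\sigma_k})$, i.e.\ condition~2. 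Conversely, if some $D^\alpha\Phi_k(0) \ne 0$ with $\sigma(\alpha) < \sigma_k$, take such an $\alpha = \beta$ of minimal weight; the $\delta_\varepsilon$-homogeneous polynomial $P_\beta(x) = \sum_{\sigma(\alpha)=\sigma(\beta)} \frac{D^\alpha\Phi_k(0)}{\alpha!} x^\alpha$ is not identically zero (monomials being linearly independent), so picking $x_0$ with $P_\beta(x_0) \ne 0$ turns the displayed formula into $\varepsilon^{-\sigma_k}\Phi_k(\delta_\varepsilon x_0) = \varepsilon^{\sigma(\beta)-\sigma_k}(P_\beta(x_0)+o(1))$, which is unbounded as $\varepsilon \to 0$ and contradicts condition~2.

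For $3 \Rightarrow 4$ set $L_k(x) := \sum_{\sigma(\alpha)=\sigma_k} \frac{D^\alpha\Phi_k(0)}{\alpha!} x^\alpha$, a polynomial that is $\delta_\varepsilon$-homogeneous of degree $\sigma_k$; the displayed expansion then yields $\varepsilon^{-\sigma_k}\Phi_k(\delta_\varepsilon x) \to L_k(x)$ uniformly on $\mathrm{Box}(r_0)$, which is~\eqref{Eq:PhiLimit}. For~\eqref{Eq:DPhiLimit} note $[D\delta_\varepsilon^{-1} \circ D\Phi \circ D\delta_\varepsilon]_{kl}(x) = \varepsilon^{\sigma_l-\sigma_k}\partial_l\Phi_k(\delta_\varepsilon x)$ and Taylor-expand $\partial_l\Phi_k \in C^{\sigma_k-1}$: since $D^\gamma(\partial_l\Phi_k)(0) = D^{\gamma+e_l}\Phi_k(0)$ and $\sigma(\gamma+e_l)=\sigma(\gamma)+\sigma_l$, condition~3 removes every term with $\sigma(\gamma) < \sigma_k-\sigma_l$, and the surviving limit is $\sum_{\sigma(\gamma)=\sigma_k-\sigma_l} \frac{D^{\gamma+e_l}\Phi_k(0)}{\gamma!} x^\gamma = \partial_l L_k(x)$ (using $\alpha_l/\alpha! = 1/(\alpha-e_l)!$). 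Here one checks that a $\gamma$ of weight $\sigma_k-\sigma_l$ has $|\gamma| \le \sigma_k-\sigma_l \le \sigma_k-1$, so it is genuinely captured by the $(\sigma_k-1)$-st order Taylor polynomial, and that the rescaled remainder $\varepsilon^{\sigma_l-\sigma_k}o(\varepsilon^{\sigma_k-1}) = \varepsilon^{\sigma_l-1}o(1) \to 0$ since $\sigma_l \ge 1$; thus~\eqref{Eq:DPhiLimit} holds with limit $DL$, and the $L_k$ are polynomials. Finally $4 \Rightarrow 2$: the limit~\eqref{Eq:PhiLimit} together with Lemma~\ref{Lemma:Condition1} gives $\Phi_k(x) = L_k(x) + o(\varepsilon^{\sigma_k})$ on $\mathrm{Box}(\varepsilon)$ with $L_k$ continuous and $\delta_\varepsilon$-homogeneous of degree $\sigma_k$, hence $|L_k(x)| \le \bigl(\max_{\|v\|=1}|L_k(v)|\bigr)\,\|x\|^{\sigma_k}$ and $\Phi_k(x)=O(\varepsilon^{\sigma_k})$ on $\mathrm{Box}(\varepsilon)$, which is condition~2. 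This closes the cycle $1 \Leftrightarrow 2 \Leftrightarrow 3 \Rightarrow 4 \Rightarrow 2$.

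The bulk of the routine work sits in $3 \Rightarrow 4$: tracking which multiindices survive in the rescaled Jacobian and verifying the surviving homogeneous polynomial equals $DL$. The one genuinely delicate point is the non-vanishing argument in $2 \Rightarrow 3$ — one must be sure the lowest-weight nonzero Taylor term of $\Phi_k$ cannot be cancelled, which is why I isolate the honest dilation-homogeneous component $P_\beta$ and invoke linear independence of monomials before letting $\varepsilon \to 0$ along a single ray.
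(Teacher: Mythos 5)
Your proof is correct and takes essentially the same route as the paper: the weighted Taylor expansion of $\Phi_k$ and of $\partial_l \Phi_k$ at the origin, grouping monomials by the weight $\sigma(\alpha)$ and identifying $L_k$ with the weight-$\sigma_k$ homogeneous part of the Taylor polynomial. The only differences are cosmetic — you close the equivalence via $4\Rightarrow 2$ through Lemma~\ref{Lemma:Condition1} where the paper argues $3\Leftrightarrow 4$ directly from the rescaled expansion, and in your $2\Rightarrow 3$ converse the dropped terms of weight $\geq \sigma_k$ are $O(1)$, so the unboundedness conclusion is unaffected.
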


\begin{proof}
Equivalence of conditions 1 and 2 is clear since condition 2
is just a rephrase of condition 1 in terms of coordinates.
To see that condition 3 is equivalent to condition 2
Taylor expand coordinate functions of the map $\Phi$
to corresponding orders:
\begin{equation}
\label{Eq:Taylor}
  \Phi_k(x)
  = P_k(x) + o(|x|^{\sigma_k})
  = \sum_{\alpha:\: |\alpha| \leq \sigma_k}
  \frac {D^\alpha \Phi_k(0)} {\alpha!} x^\alpha
  + o(|x|^{\sigma_k}).
\end{equation}
Note, that $x^\alpha = O(\varepsilon^{\sigma(\alpha)})$
if $x \in \mathrm{Box}(\varepsilon)$.
Then $\Phi_k(x) = O(\varepsilon^{\sigma_k})$ if and only if
$D^\alpha \Phi_k(0) = 0$ for all $\alpha$ such that
$\sigma(\alpha) < \sigma_k$.

Now prove that condition 3 is necessary and sufficient for
existence of the limit~\eqref{Eq:PhiLimit}.
Indeed, using expansion~\eqref{Eq:Taylor}
we derive
\[
  \frac{1}{\varepsilon^{\sigma_k}}
  \Phi_k(\delta_\varepsilon x) =
  \sum_{\alpha : \: \sigma(\alpha) < \sigma_k}
  \frac {D^\alpha \Phi_k(0)}
  {\alpha! \, \varepsilon^{\sigma_k - \sigma(\alpha)}} 
  x^\alpha
  + \sum_{\alpha : \: \sigma(\alpha) = \sigma_k}
  \frac {D^\alpha \Phi_k(0)}
  {\alpha!} x^\alpha
  + o(1).
\]
This expression converges as $\varepsilon \to 0$
if and only if the first term vanishes, i.\,e. condition~3
is fulfilled. The second term in this case is the expression
for the coordinate function $L_k(x)$ of the limiting map $L$.

Next, consider limit~\eqref{Eq:DPhiLimit}.
Since $D\delta_\varepsilon$ is a diagonal matrix with
$\varepsilon^{\sigma_1}, \dots, \varepsilon^{\sigma_N}$
on diagonal, we have
\[
  [ D\delta_\varepsilon^{-1} \circ D\Phi
  \circ D\delta_\varepsilon ]_{kl} (x)
  = \varepsilon^{\sigma_l - \sigma_k}
  \frac{\partial \Phi_k}{\partial x_l}(\delta_\varepsilon x).
\]
Since
$\frac{\partial \Phi_k}{\partial x_l} \in C^{\sigma_k - 1}(U)$,
we have
$\frac{\partial \Phi_k}{\partial x_l}(x)
= \frac{\partial P_k}{\partial x_l}(x)
+ o(|x|^{\sigma_k - 1})$,
where $P_k$ is the Taylor polynomial for $\Phi_k$ defined 
by~\eqref{Eq:Taylor}. Note, that
$\frac{\partial (x^\alpha)}{\partial x_l}$ is a
$\delta_\varepsilon$-homogeneous monomial of degree
$\sigma(\alpha) - \sigma_l$. Thus,
\[
  \varepsilon^{\sigma_l - \sigma_k}
  \frac{\partial \Phi_k}{\partial x_l}(\delta_\varepsilon x)
  = \sum_{\alpha : \: \sigma(\alpha) < \sigma_k}
  \frac {D^\alpha \Phi_k(0)}
    {\alpha! \, \varepsilon^{\sigma_k - \sigma(\alpha)}}
  \frac{\partial (x^\alpha)}{\partial x_l}
  + \sum_{\alpha : \: \sigma(\alpha) = \sigma_k}
  \frac {D^\alpha \Phi_k(0)}{\alpha!}
  \frac{\partial (x^\alpha)}{\partial x_l}
  + o(1).
\]
Again, the expression converges as $\varepsilon \to 0$
if and only if the first term vanishes, i\,e. condition~3
holds. The second term in this case is
$\frac{\partial L_k}{\partial x_l}(x)$.
The lemma is proved.
\end{proof}

\begin{corollary}
\label{Corollary:CC3}
Let $\mathbb{M}$ be $C^m$-smooth Carnot--Carath\'{e}odory
space of the depth~$m$, $p \in \mathbb{M}$,
$\theta_p$ be the coordinates of the 1st kind in the 
neighborhood of $p$.
$C^m$-smooth coordinate system $\phi_p$ in the neighborhood 
of $p$ enjoys the conditions of Corollaries~\ref{Corollary:CC1}
and~\ref{Corollary:CC2} if and only if there are constants
$0 < C_1 \leq C_2 < \infty$ and $\varepsilon_0 > 0$ such that
\[
  \phi_p(\mathrm{Box}(C_1 \varepsilon))
  \subset
  \theta_p(\mathrm{Box}(\varepsilon))
  \subset
  \phi_p(\mathrm{Box}(C_2 \varepsilon))
\]
for all $\varepsilon \in (0, \varepsilon_0)$.
\end{corollary}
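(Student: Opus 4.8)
The plan is to reduce the statement entirely to the transition map $\Phi_p = \phi_p^{-1} \circ \theta_p$ and then invoke Lemma~\ref{Lemma:SmoothCond}. First I would observe that since $\mathbb M$ is a $C^m$-smooth Carnot--Carath\'eodory space of depth $m$, every basis vector field $X_j$ is at least $C^1$, so Theorem~\ref{Th:KarmVod} applies and furnishes both the homogeneous limit quasimetric $\widehat d_\infty$ (pushed forward via $\theta_p$) and the homogeneous limit vector fields $\widehat X_k$ in the canonical coordinates $\theta_p$. Moreover, the coordinates of the first kind $\theta_p$ for a $C^m$-smooth structure are themselves $C^m$-smooth, and $\phi_p$ is $C^m$ by hypothesis, so the transition map $\Phi = \phi_p^{-1} \circ \theta_p$ is a $C^m$-diffeomorphism of a neighborhood of the origin with $\Phi(0)=0$. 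Since $\sigma_k \le m$ for every $k$ (the weights are bounded by the depth), we have $\Phi_k \in C^{\sigma_k}$, which is exactly the hypothesis under which Lemma~\ref{Lemma:SmoothCond} operates.

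Next I would translate the chain of inclusions in the statement into condition~1 of Lemma~\ref{Lemma:SmoothCond}. Applying $\phi_p^{-1}$ throughout, the inclusion $\phi_p(\mathrm{Box}(C_1\varepsilon)) \subset \theta_p(\mathrm{Box}(\varepsilon)) \subset \phi_p(\mathrm{Box}(C_2\varepsilon))$ is equivalent to
\[
  \mathrm{Box}(C_1 \varepsilon) \subset \Phi(\mathrm{Box}(\varepsilon)) \subset \mathrm{Box}(C_2 \varepsilon)
  \quad \text{for all } \varepsilon \in (0, \varepsilon_0),
\]
which is the necessary condition~\eqref{Eq:HomoNecessary}/\eqref{Eq:IntroCond1}. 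The right-hand inclusion $\Phi(\mathrm{Box}(\varepsilon)) \subset \mathrm{Box}(C_2\varepsilon)$ is precisely condition~1 of Lemma~\ref{Lemma:SmoothCond} (with $C=C_2$), so that lemma gives the uniform limits~\eqref{Eq:PhiLimit} and~\eqref{Eq:DPhiLimit}, i.e.\ the existence of $L(x) = \lim_{\varepsilon\to 0} \delta_\varepsilon^{-1}\circ\Phi\circ\delta_\varepsilon(x)$ with polynomial coordinate functions and of $\lambda(x) = DL(x) = \lim_{\varepsilon\to 0} D\delta_\varepsilon^{-1}\circ D\Phi\circ D\delta_\varepsilon(x)$. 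To invoke Corollaries~\ref{Corollary:CC1} and~\ref{Corollary:CC2} I still need $L$ to be a homeomorphism and $\det\lambda(0)\ne 0$. Here I would use the \emph{left-hand} inclusion: since $L$ has polynomial components and is the uniform limit of the homeomorphisms $\Phi_\varepsilon$, and since~\eqref{Eq:HomoNecessary} forces $\mathrm{Box}(C_1\varepsilon)\subset\Phi(\mathrm{Box}(\varepsilon))$, passing to the limit shows $\mathrm{Box}(C_1)\subset L(\mathrm{Box}(1))$, so $L$ is nondegenerate; combined with $DL(0)=\lambda(0)$ being the relevant block-triangular limit of $D\Phi(0)$ (whose determinant equals $\det D\Phi(0) \ne 0$ because the off-diagonal-weight entries with $\sigma_k<\sigma_l$ are killed and those with $\sigma_k>\sigma_l$ are forced to vanish at $0$ by condition~3, leaving a block-triangular matrix with the same diagonal blocks as $D\Phi(0)$), we get $\det\lambda(0)\ne 0$ and hence $L$ a diffeomorphism. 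This yields the forward direction: the ball-box inclusions imply the hypotheses of both corollaries.

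For the converse, suppose $\phi_p$ satisfies the hypotheses of Corollaries~\ref{Corollary:CC1} and~\ref{Corollary:CC2}; in particular the limit $\lambda(x)$ exists uniformly and $\det\lambda(0)\ne 0$, which by Lemma~\ref{Lemma:Condition2} forces the existence of the uniform limit $L(x)$ as well. Since $\Phi_k \in C^{\sigma_k}$, Lemma~\ref{Lemma:SmoothCond} tells us that existence of the limit~\eqref{Eq:PhiLimit} already implies condition~1, i.e.\ $\Phi(\mathrm{Box}(\varepsilon)) \subset \mathrm{Box}(C_2\varepsilon)$ for some $C_2$; pushing forward by $\phi_p$ gives the right-hand inclusion $\theta_p(\mathrm{Box}(\varepsilon)) \subset \phi_p(\mathrm{Box}(C_2\varepsilon))$. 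The left-hand inclusion follows by applying the same argument to $\Phi^{-1}$: by Lemma~\ref{Lemma:Condition1} the limit $L^{-1}(y) = \lim_{\varepsilon\to 0}\delta_\varepsilon^{-1}\circ\Phi^{-1}\circ\delta_\varepsilon(y)$ also exists uniformly, and $\Phi^{-1}$ is a $C^m$-diffeomorphism fixing the origin with $(\Phi^{-1})_k \in C^{\sigma_k}$, so Lemma~\ref{Lemma:SmoothCond} applied to $\Phi^{-1}$ gives $\Phi^{-1}(\mathrm{Box}(\varepsilon)) \subset \mathrm{Box}(C\varepsilon)$, equivalently $\mathrm{Box}(\varepsilon) \subset \Phi(\mathrm{Box}(C^{-1}\varepsilon))$; setting $C_1 = C^{-1}$ and pushing forward by $\phi_p$ yields $\phi_p(\mathrm{Box}(C_1\varepsilon)) \subset \theta_p(\mathrm{Box}(\varepsilon))$. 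The main obstacle is bookkeeping: one must be careful that the constants $C_1, C_2$ can be chosen uniformly and that the ``condition~1'' of Lemma~\ref{Lemma:SmoothCond} really does apply to $\Phi^{-1}$ with the right smoothness ($(\Phi^{-1})_k \in C^{\sigma_k}$ holds because $\Phi$ is a $C^m$-diffeomorphism and $\sigma_k \le m$), after which everything is a direct citation of the lemma and the two earlier corollaries.
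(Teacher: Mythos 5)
Your proposal is correct and its core coincides with the paper's proof: reduce everything to the transition map and apply Lemma~\ref{Lemma:SmoothCond}, whose condition~1 is exactly the box inclusion rewritten in coordinates. The one genuine difference is the direction ``conditions of Corollaries~\ref{Corollary:CC1},~\ref{Corollary:CC2} $\Rightarrow$ box inclusions'': the paper gets it in one stroke from Theorem~\ref{Th:HomoNecessary} (the quasimetric in the new coordinates has a homogeneous approximation by Theorem~\ref{Th:Condition1}, hence the two-sided box estimate), whereas you stay entirely at the level of the transition map, applying Lemma~\ref{Lemma:SmoothCond} to $\Phi$ for one inclusion and, via the inverse-limit part of Lemma~\ref{Lemma:Condition1}, to $\Phi^{-1}$ for the other; both are valid, and your route has the small advantage of never invoking the quasimetric, while the paper's is shorter. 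You also supply details the paper leaves implicit and which are genuinely needed to ``enjoy the conditions'' of the two corollaries: that condition~3 of Lemma~\ref{Lemma:SmoothCond} makes $D\Phi(0)$ block-triangular, so $\det\lambda(0)=\det D\Phi(0)\neq 0$, and hence (together with $\delta_\varepsilon$-homogeneity of the polynomial map $L$) that $L$ is a global homeomorphism --- the last step deserves one more line (local injectivity at $0$ propagates by $L\circ\delta_t=\delta_t\circ L$). Two cosmetic slips that do not affect the result: the vanishing of the entries of $\lambda(0)$ with $\sigma_k>\sigma_l$ comes from positive homogeneity of $\lambda_{kl}$ rather than directly from condition~3, and in the converse the inclusion $\Phi^{-1}(\mathrm{Box}(\varepsilon))\subset\mathrm{Box}(C\varepsilon)$ should be restated as $\mathrm{Box}(C^{-1}\varepsilon)\subset\Phi(\mathrm{Box}(\varepsilon))$ (your ``equivalently'' line misplaces the factor, though your final choice $C_1=C^{-1}$ is right).
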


\begin{proof}
Necessity immediately follows from 
Theorem~\ref{Th:HomoNecessary}.
Applying Lemma~\ref{Lemma:SmoothCond} we see that
$\Phi = \theta_p^{-1} \circ \phi_p$
enjoys conditions of Theorems~\ref{Th:Condition1} 
and~\ref{Th:Condition2}.
\end{proof}

\begin{remark}
From examples in Remarks~\ref{Rem:Sin1} and~\ref{Rem:Sin2}
it follows that this assertion on the smoothness of
the map~$\Phi$ can not be weakened in general case,
i.\,e. conditions of Theorems~\ref{Th:HomoNecessary},
\ref{Th:Condition1} and \ref{Th:Condition2} are the same
thing for $C^m$-smooth maps, and become distinct as soon
as the smoothness is a bit lower (like $C^{m-1,1}$).
\end{remark}

\begin{remark}
For $C^1$-smooth Carnot manifolds the Ball--Box Theorem holds 
(clause~2 of Theorem~\ref{Prop:NilpApprox},
see proof, e.\,g., in \cite[Theorem 8]{KarmVodIndam}).
This theorem together with Corollary~\ref{Corollary:CC3}
proves Theorem~\ref{Prop:BallBox} given in the introduction
in the case of $C^m$-smooth Carnot manifolds.
\end{remark}

\section{Canonical coordinate systems}
\label{Sec:2Kind}

Let $\mathbb{M}$ be $C^k$-smooth Carnot--Carath\'{e}odory
space,
$X_1, \dots, X_N$ be the basis of $T\mathbb{M}$ in
a neighborhood of $p \in \mathbb{M}$ subordinate to
the filtration \eqref{Eq:NonSmoothFiltration}.
Split the family of vector fields $\{ X_i \}_{i=1}^N$ into
$L$ disjoint subfamilies
$\{ X_{j,1}, \ldots, X_{j,k_j} \}$, $j=1, \ldots, L$,
and consider the mapping
\begin{multline}
\label{Eq:2KindCoord}
  \phi_p(u_1, \ldots, u_N)
  = \exp (u_{L,1} X_{L,1} + \ldots + u_{L,k_L} X_{L,k_L})
  \circ \ldots \\
  \ldots \circ
  \exp (u_{2,1} X_{2,1} + \ldots + u_{2,k_2} X_{2,k_2})
  \circ
  \exp (u_{1,1} X_{1,1} + \ldots + u_{1,k_1} X_{1,k_1})(p).
\end{multline}
Then $\phi_p \in C^k$ and
$\frac{\partial \phi_p}{\partial u_i}(0) = X_i(p)$.
Consequently, $\phi_p$ is a $C^k$-diffeomorphism of a 
neighborhood of the origin on a neighborhood of $p$.
The particular case of such mapping is the 
\emph{canonical coordinate system of the 2nd kind}:
\[
 \theta^2_p(u_1, \dots, u_N) =
 \exp (u_N X_N) \circ \exp (u_{N-1} X_{N-1}) \circ \ldots
 \circ \exp (u_1 X_1) (p).
\]

\begin{theorem}
\label{Th:2KindCoord}
Let $\mathbb{M}$ be a $C^1$-smooth Carnot--Carath\'{e}odory
space, $p \in \mathbb{M}$.
Define using the coordinate system~\eqref{Eq:2KindCoord} 
an one-parametric family of dilatations
\[
   \Delta^p_\varepsilon :
   \phi_p(x_1, \ldots, x_N) \mapsto 
   \phi_p(\varepsilon^{\sigma_1} x_1, \ldots,
     \varepsilon^{\sigma_N} x_N).
\]
Then there is a limit
\[
  \widetilde d_\infty(x, y) = \lim_{\varepsilon \to 0}
  \frac{1}{\varepsilon} d_\infty
  (\Delta^p_\varepsilon x, \Delta^p_\varepsilon y)
\]
uniform in a neighborhood of $p$
and $\widetilde d_\infty$ is a
$\Delta^p_\varepsilon$-homogeneous quasimetric
isometric to quasimetric $\widehat d_\infty$
from Theorem~\ref{Th:KarmVod}.
If $\mathbb{M}$ is a $C^m$-smooth space of the depth~$m$
then there are uniform in the neighborhood of $p$ limits 
of the scaled vector fields
\[
  \widetilde X_k(x) = \lim_{\varepsilon \to 0}
  \, (\Delta^p_\varepsilon)^{-1}_* \varepsilon^{d_k}
  X_k(\Delta^p_\varepsilon x),
\]
and the vector fields $\widetilde X_k$ define the structure
of the nilpotent graded Lie algebra isomorphic
to the algebra from Theorem~\ref{Th:KarmVod}.
\end{theorem}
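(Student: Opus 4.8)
The plan is to verify that the map $\Phi = \theta_p^{-1} \circ \phi_p$ (or rather the family of coordinate-change maps built from the two nested-exponential coordinate systems) satisfies the conditions of Theorems~\ref{Th:Condition1} and~\ref{Th:Condition2}, and then invoke Corollaries~\ref{Corollary:CC1} and~\ref{Corollary:CC2}. Since Theorem~\ref{Th:KarmVod} already gives the homogeneous limit of $d_\infty$ and of the vector fields $X_k$ in the canonical coordinates of the first kind $\theta_p$, the only thing to check is that passing to the coordinate system $\phi_p$ of the form~\eqref{Eq:2KindCoord} is a ``good'' coordinate change in the sense of Section~\ref{Sec:Homogeneous}, respectively Section~\ref{Sec:Nilpotent}.

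First I would set up the transition map explicitly. Write $\phi_p(u) = (F_L \circ \cdots \circ F_1)(p)$ where $F_j(q) = \exp(\sum_i u_{j,i} X_{j,i})(q)$, and similarly $\theta_p(u) = \exp(\sum u_i X_i)(p)$. Both are $C^1$ and agree to first order at $0$: $\partial\phi_p/\partial u_i(0) = X_i(p) = \partial\theta_p/\partial u_i(0)$. The key structural fact is that the flow of a vector field $X$ of weight $\sigma$ interacts with the anisotropic dilatations in a controlled way: conjugating the flow $\exp(tX)$ by $\Delta^p_\varepsilon$ and rescaling, $\varepsilon^\sigma$ times the pulled-back field converges (uniformly) to the homogeneous field $\widehat X$ — this is precisely clause~2 of Theorem~\ref{Th:KarmVod}. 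Composing such flows, the map $\delta_\varepsilon^{-1} \circ \Phi \circ \delta_\varepsilon$ is the composition in $\mathbb{R}^N$ of the conjugated flows $\delta_\varepsilon^{-1} \circ (\theta_p^{-1} F_j \theta_p) \circ \delta_\varepsilon$, each of which converges uniformly to the time-one flow of the corresponding nilpotent field $\sum_i u_{j,i} \widehat X'_{j,i}$ (the primed fields of Theorem~\ref{Th:KarmVod}). Since a finite composition of uniformly convergent continuous maps with uniformly continuous limits converges uniformly, the limit $L(x) = \lim_{\varepsilon\to0} \delta_\varepsilon^{-1}\circ\Phi\circ\delta_\varepsilon(x)$ exists and equals the nilpotent-group analogue of $\phi_p$, i.e. the second-kind coordinate map built from $\widehat X'_k$. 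Because $DL(0) = \mathrm{Id}$ (first-order agreement), $L$ is a homeomorphism (indeed a diffeomorphism near $0$), so Corollary~\ref{Corollary:CC1} applies and gives the convergence of $d_\infty$ to a $\Delta^p_\varepsilon$-homogeneous quasimetric isometric to $\widehat d_\infty$ via $\mathcal{L} = \phi_p \circ L \circ \theta_p^{-1}$.

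For the second assertion, when $\mathbb{M}$ is $C^m$-smooth of depth $m$, the coordinate map $\phi_p$ is $C^m$, hence $\Phi$ is $C^m$, so by Lemma~\ref{Lemma:SmoothCond} (applied coordinate-wise, noting $\Phi_k \in C^m \subset C^{\sigma_k}$ since $\sigma_k \le m$) all three conditions~\eqref{Eq:IntroCond1},~\eqref{Eq:IntroCond2},~\eqref{Eq:IntroCond3} are equivalent, and it suffices to check the Box-inclusion condition~1 of that lemma — equivalently that $\Phi_k(x) = O(\varepsilon^{\sigma_k})$ on $\mathrm{Box}(\varepsilon)$, equivalently that $D^\alpha\Phi_k(0) = 0$ whenever $\sigma(\alpha) < \sigma_k$. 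This vanishing of low-weight Taylor coefficients of the transition map between first-kind and second-kind (or nested-exponential) coordinates is the classical fact underlying the Ball--Box theorem for smooth sub-Riemannian spaces; it follows from the graded structure $[H_i, H_j] \subset H_{i+j}$ together with the Campbell--Hausdorff formula relating $\exp(tX)\exp(sY)$ to $\exp(tX + sY + \tfrac{1}{2}[X,Y] + \cdots)$, each bracketed term contributing a coordinate of weight matching its bracket length. Having verified this, Lemma~\ref{Lemma:SmoothCond} yields the uniform limit $\lambda = DL$ with $\det\lambda(0) = 1 \ne 0$, so Corollary~\ref{Corollary:CC2} applies: the scaled vector fields $X_k$ converge in the $\Delta^p_\varepsilon$-coordinates to $\widetilde X_k = \mathcal{L}_* \widehat X_k$, which form a graded nilpotent Lie algebra isomorphic (via $\mathcal{L}_*$) to the algebra of Theorem~\ref{Th:KarmVod}.

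The main obstacle is the $C^1$ part, i.e. showing the uniform convergence of $\delta_\varepsilon^{-1}\circ\Phi\circ\delta_\varepsilon$ directly from the definitions without assuming extra smoothness, since here Lemma~\ref{Lemma:SmoothCond} is unavailable (the transition map is only $C^1$) and one cannot Taylor-expand past first order. The argument must instead proceed through the flow-conjugation estimates of Theorem~\ref{Th:KarmVod} and a careful bookkeeping that a composition of the $L$ rescaled flows converges uniformly and that each limiting flow is a homeomorphism with uniformly continuous inverse, so the composed limit is again a homeomorphism — the technical care is entirely in propagating uniformity through the composition, exactly as in the proof of Lemma~\ref{Lemma:Condition1}. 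Once that is in place, the $C^m$ half is essentially bookkeeping with the Campbell--Hausdorff formula and the weight grading.
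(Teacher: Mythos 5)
Your overall strategy coincides with the paper's: treat $\Phi=\theta_p^{-1}\circ\phi_p$ as a transition map, identify the homogeneous limit $L$ with the nested-exponential map built from the nilpotent fields $\widehat X'_k$, and then invoke Corollaries~\ref{Corollary:CC1} and~\ref{Corollary:CC2}. The two proofs diverge, however, exactly at the technical core, and there your justification has a gap. You claim that the uniform convergence of each conjugated flow $\delta_\varepsilon^{-1}\circ(\theta_p^{-1}\circ F_j\circ\theta_p)\circ\delta_\varepsilon$ to the time-one flow of $\sum_i u_{j,i}\widehat X'_{j,i}$ ``is precisely clause~2 of Theorem~\ref{Th:KarmVod}.'' It is not: clause~2 gives uniform convergence of the rescaled \emph{vector fields}, and passing from that to uniform convergence of their \emph{flows} (uniformly in the base point and in the coefficient tuple $u$) is the actual content one needs in the $C^1$ case. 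This step is not free for merely continuous rescaled fields; it requires either a compactness/uniqueness argument (Arzel\`a--Ascoli plus uniqueness of integral curves of the polynomial limit fields, with care about uniformity) or, as the paper does, the quantitative curve-divergence estimate of Theorem~\ref{Th:CurveDivergence} imported from~\cite{KarmQuasi}, which compares an integral curve of $\sum b_i X_i$ with the corresponding integral curve of $\sum b_i\widehat X_i$ with an $o(\varepsilon)$ bound uniform in the data. Your own closing remark concedes that all the difficulty lies in ``propagating uniformity,'' but as written the proposal does not supply the flow-comparison ingredient; it only cites a theorem that does not contain it. The gap is fillable, but it is the heart of the $C^1$ statement rather than bookkeeping.

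For the $C^m$ half you propose to verify condition~3 of Lemma~\ref{Lemma:SmoothCond} (vanishing of the low-weight Taylor coefficients of $\Phi$ at $0$) directly via the Campbell--Hausdorff formula and the grading. This is the classical route for $C^\infty$ fields, but with only $C^m$ smoothness the BCH expansion holds only to finite order with remainders, and $\Phi$ also involves $\theta_p^{-1}$, so the ``bookkeeping'' is genuinely delicate and is left unargued. The paper avoids this computation entirely by a bootstrap you could have used: once the first ($C^1$) part is established, Theorem~\ref{Th:HomoNecessary} yields the Box inclusions for $\Phi$, and since $\Phi\in C^m$, Lemma~\ref{Lemma:SmoothCond} (via Corollary~\ref{Corollary:CC3}) upgrades that inclusion to the derivative limit $\lambda=DL$ with $\det\lambda(0)\ne 0$, so Corollary~\ref{Corollary:CC2} applies directly. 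In short: your plan is structurally right, but (a) the flow-convergence step in the $C^1$ part needs an actual argument or the citation of the curve-divergence theorem, and (b) the BCH computation in the $C^m$ part is both under-justified at this smoothness and unnecessary given what the first part already provides.
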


To prove this theorem we use the following result.

\begin{theorem}[\cite{KarmQuasi}]
\label{Th:CurveDivergence}
Let $\mathbb{M}$ be $C^1$-smooth Carnot--Carath\'{e}odory
space, $p \in \mathbb{M}$, $X_1, \dots, X_N$ be a 
basis in a neighborhood of~$p$, subordinate to the
structure~\eqref{Eq:NonSmoothFiltration}, and
let $\widehat X_k$, $k = 1, \dots, N$, be the nilpotent
approximations of these vector fields, built using
the canonical 1st kind coordinates as in 
Theorem~\ref{Th:KarmVod}. Then there is a neighborhood
$U$ of $p$ such that for any two absolutely continuous
curves $\gamma, \widehat\gamma : [0, 1] \to \mathbb{M}$
such that $\gamma(0) = \widehat\gamma(0) \in U$ and
\[
  \dot\gamma(t) = \sum_{i=1}^N b_i(t) X_i(\gamma(t)),
  \quad
  \dot{\widehat\gamma}(t) = \sum_{i=1}^N
  b_i(t) \widehat X_i(\widehat\gamma(t)),
\]
where measurable functions $b_i(t)$ meet the property
\begin{equation}
\label{Eq:MeasClass}
  \int\limits_0^1 |b_i(t)| \, dt < S \varepsilon^{\sigma_i},
  \quad S < \infty, \quad i = 1, \dots, N,
\end{equation}
we have
\[
  \max \{ d_\infty(\gamma(1), \widehat\gamma(1)),
  \widehat d_\infty(\gamma(1), \widehat\gamma(1)) \}
  \leq o(1) \cdot \varepsilon,
\]
where $o(1)$ is uniform in $U$ and in all collections of
functions $\{ b_i(t) \}_{i=1}^N$ with the
property~\eqref{Eq:MeasClass}.
\end{theorem}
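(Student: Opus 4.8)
The plan is to reduce the statement to a continuous-dependence estimate for ordinary differential equations after rescaling around the common initial point $g=\gamma(0)$, at which I take the nilpotent approximation (so that the comparison is meaningful; the assertion that $o(1)$ is uniform in $U$ reflects uniformity in the base point $g\in U$). First I would pass to the canonical $1$st kind coordinates $\theta_g$ centered at $g$, so that $g$ becomes the origin and both curves start at $0$. Writing $g_\varepsilon(t)=\delta_\varepsilon^{-1}\circ\theta_g^{-1}(\gamma(t))$ and $\widehat g_\varepsilon(t)=\delta_\varepsilon^{-1}\circ\theta_g^{-1}(\widehat\gamma(t))$ and substituting into the defining equations, homogeneity of $\delta_\varepsilon$ turns the controls into $\tilde b_i(t)=\varepsilon^{-\sigma_i}b_i(t)$, which by~\eqref{Eq:MeasClass} satisfy $\int_0^1|\tilde b_i|\,dt<S$ uniformly in $\varepsilon$, and turns the original fields into the rescaled fields $X_i^\varepsilon=(\delta_\varepsilon^{-1})_*\,\varepsilon^{\sigma_i}X_i(\delta_\varepsilon\,\cdot)$. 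The rescaled equations read $\dot g_\varepsilon=\sum_i\tilde b_i\,X_i^\varepsilon(g_\varepsilon)$ and $\dot{\widehat g}_\varepsilon=\sum_i\tilde b_i\,\widehat X_i'(\widehat g_\varepsilon)$, where $\widehat X_i'$ are the homogeneous fields in these coordinates and, by Theorem~\ref{Th:KarmVod}, $X_i^\varepsilon\to\widehat X_i'$ as $\varepsilon\to0$ uniformly on a fixed box (and uniformly in $g\in U$), while $\widehat X_i'$ are left invariant under $\delta_\varepsilon$.

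Next I would establish an a priori bound. Since $g_\varepsilon(0)=\widehat g_\varepsilon(0)=0$ and $\int|\tilde b_i|<S$, while $X_i^\varepsilon$ and $\widehat X_i'$ are uniformly bounded on a fixed box for small $\varepsilon$, a standard continuation argument confines both rescaled trajectories to a box $\mathrm{Box}(R)$ with $R=R(S)$ independent of $\varepsilon$ and $g$. On this box I would run a Gronwall comparison for $u_\varepsilon(t)=g_\varepsilon(t)-\widehat g_\varepsilon(t)$. Splitting $X_i^\varepsilon(g_\varepsilon)-\widehat X_i'(\widehat g_\varepsilon)=\bigl(X_i^\varepsilon(g_\varepsilon)-\widehat X_i'(g_\varepsilon)\bigr)+\bigl(\widehat X_i'(g_\varepsilon)-\widehat X_i'(\widehat g_\varepsilon)\bigr)$, the first difference is $o(1)$ uniformly by the field convergence, and the second is bounded by the Lipschitz constant of $\widehat X_i'$ times $|u_\varepsilon|$. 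Integrating against the finite measure $d\mu=\sum_i|\tilde b_i|\,dt$ (of total mass $<NS$) and applying the integral form of the Gronwall inequality yields $\sup_t|u_\varepsilon(t)|=o(1)$ as $\varepsilon\to0$, uniformly in the admissible control families and in $g\in U$.

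The step I expect to be the main obstacle is the Lipschitz estimate for the limiting fields, since in the low-smoothness setting the nilpotent approximations $\widehat X_k=(\theta_g)_*\widehat X_k'$ are only continuous on the manifold. The reason for carrying out the whole argument inside the chart $\theta_g$ is precisely that there the homogeneous fields $\widehat X_k'$ are the left-invariant fields of the graded nilpotent group of Theorem~\ref{Th:KarmVod}, hence polynomial and therefore locally Lipschitz on $\mathrm{Box}(R)$; this is what legitimizes the Gronwall comparison and the Carath\'eodory uniqueness of $\widehat\gamma$ for merely $L^1$ controls. Observe that no regularity of $X_i^\varepsilon$ beyond uniform convergence is needed, since the Lipschitz bound is applied only to $\widehat X_i'$.

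Finally I would translate the coordinate estimate into the two quasimetrics. From $\theta_g^{-1}(\gamma(1))=\delta_\varepsilon g_\varepsilon(1)$ and $\theta_g^{-1}(\widehat\gamma(1))=\delta_\varepsilon\widehat g_\varepsilon(1)$ with $g_\varepsilon(1),\widehat g_\varepsilon(1)\in\mathrm{Box}(R)$, both endpoints lie in $\mathrm{Box}(R\varepsilon)$, so each is within $d_\infty$-distance $O(\varepsilon)$ of $g$. For the distance between them I would invoke the convergence $\tfrac1\varepsilon d_\infty(\Delta^g_\varepsilon\,\cdot,\Delta^g_\varepsilon\,\cdot)\to\widehat d_\infty'$ of Theorem~\ref{Th:KarmVod} (equivalent to its clause~3 by homogeneity): this gives $d_\infty(\gamma(1),\widehat\gamma(1))=\varepsilon\bigl(\widehat d_\infty'(g_\varepsilon(1),\widehat g_\varepsilon(1))+o(1)\bigr)$, and since $\widehat d_\infty'$ is a continuous quasimetric vanishing on the diagonal while $|g_\varepsilon(1)-\widehat g_\varepsilon(1)|=o(1)$ inside a fixed box, its value is itself $o(1)$; hence $d_\infty(\gamma(1),\widehat\gamma(1))=o(\varepsilon)$. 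The bound for $\widehat d_\infty$ is even more direct: by $\delta_\varepsilon$-homogeneity of $\widehat d_\infty'$ one gets $\widehat d_\infty(\gamma(1),\widehat\gamma(1))=\varepsilon\,\widehat d_\infty'(g_\varepsilon(1),\widehat g_\varepsilon(1))=o(\varepsilon)$. Taking the maximum completes the proof, with all $o(1)$ uniform in $U$ and in the control family by construction.
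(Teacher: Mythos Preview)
The paper does not prove this theorem: it is quoted from \cite{KarmQuasi} and used as a black box in the proof of Theorem~\ref{Th:2KindCoord}. There is therefore no argument in the present paper to compare your proposal against.

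As an independent assessment, your outline is the standard and essentially correct route for such an estimate: rescale around the common initial point so that the controls become uniformly $L^1$-bounded, use the uniform convergence $X_i^\varepsilon\to\widehat X_i'$ from Theorem~\ref{Th:KarmVod}, exploit that the limiting left-invariant fields are polynomial (hence locally Lipschitz) to run a Gronwall comparison, and translate back via the quasimetric convergence. One point deserves care. As the theorem is stated here, and as it is actually applied in the proof of Theorem~\ref{Th:2KindCoord}, the nilpotent fields are $\widehat X_k=(\theta_p)_*\widehat X_k'$ built at the fixed point $p$, while the common initial point $g=\gamma(0)$ ranges over $U$; the second curve $\widehat\gamma$ is driven by the approximation at $p$, not at $g$. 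You instead center both the chart and the approximation at $g$, so that in your rescaled picture the $\widehat\gamma$-equation is driven by the homogeneous fields of the chart $\theta_g$ and the Gronwall step is clean. To match the formulation used in this paper you need one more ingredient: either left-translate in the group model to carry $\theta_p^{-1}(g)$ to the origin (the $\widehat X_i'$ are left-invariant, so they survive this, and the translated dilation still contracts to the identity), or add a separate $o(\varepsilon)$ comparison between the nilpotent approximations at $p$ and at $g$ via continuity of the structure constants $c_{ijk}$. This is not a flaw in the strategy, but it is a step your write-up elides.
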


\begin{proof}[Proof of Theorem~\ref{Th:2KindCoord}]
Let $\widehat X'_1, \ldots, \widehat X'_N$ be
$\delta_\varepsilon$-homogeneous vector fields
from Theorem~\ref{Th:KarmVod}.
Consider mapping
$\widehat{\phi}' : \mathbb{R}^N \to \mathbb{R}^N$
defined as
\begin{multline*}
  \widehat{\phi}'(u_1, \ldots, u_N)
  = \exp (u_{L,1} \widehat X'_{L,1} + \ldots
    + u_{L,k_L} \widehat X'_{L,k_L})
  \circ \ldots \\
  \ldots \circ
  \exp (u_{2,1} \widehat X'_{2,1} + \ldots
    + u_{2,k_2} \widehat X'_{2,k_2})
  \circ
  \exp (u_{1,1} \widehat X'_{1,1} + \ldots
    + u_{1,k_1} \widehat X'_{1,k_1})(0).
\end{multline*}
Note that $\widehat{\phi}'$ is a $C^\infty$-diffeomorphism
of $\mathbb{R}^N$. Since $\widehat X'_j$ are homogeneous
with degrees~$\sigma_j$, for all $u, v \in \mathbb{R}^N$
we have
\[
  \delta_\varepsilon \circ \exp (
     u_1 \widehat X'_1 + \ldots + u_N \widehat X'_N
  )(v) =
  \exp (
    \varepsilon^{\sigma_1} u_1 \widehat X'_1 + \ldots
    + \varepsilon^{\sigma_N} u_N \widehat X'_N
  )
  (\delta_\varepsilon v),
\]
thus
$\delta_\varepsilon \circ \widehat{\phi}' =
\widehat{\phi}' \circ \delta_\varepsilon$.

By Theorem~\ref{Th:CurveDivergence} for every tuple
of constants $(u_1, \ldots, u_N)$ we have
\[
  \widehat d_\infty \Big(
    \exp \Big( \sum_{k=1}^N
    \varepsilon^{\sigma_k} u_k X_k \Big)(x),
    \theta_p \circ \exp \Big( \sum_{k=1}^N 
    \varepsilon^{\sigma_k} u_k
    \widehat X'_k \Big) \circ \theta_p^{-1} (x)
  \Big) = o(\varepsilon)
\]
where $o(\varepsilon)$ is uniform in $x$ belonging to
a neighborhood of $p$ and in $(u_1, \ldots, u_N)$ belonging
to a neighborhood of origin. Therefore
\[
  \widehat d_\infty \big(
    \phi_p(\delta_\varepsilon u),
    \theta_p \circ \widehat \phi'(\delta_\varepsilon u)
  \big) = o(\varepsilon)
\]
when $\varepsilon \to 0$ uniformly in $u$.
Quasimetric $\widehat{d}_\infty$ is homogeneous w.r.t.
$\theta_p$, thus
\begin{multline*}
  \frac{1}{\varepsilon} \widehat d_\infty \big(
    \phi_p(\delta_\varepsilon u),
    \theta_p \circ \widehat \phi'(\delta_\varepsilon u)
  \big)
  = \frac{1}{\varepsilon} \widehat d_\infty \big(
    \theta_p \circ \theta_p^{-1} \circ 
    \phi_p(\delta_\varepsilon u),
    \theta_p \circ \delta_\varepsilon \circ \widehat \phi'(u)
  \big) \\
  = \widehat d_\infty \big(
    \theta_p \circ \delta_\varepsilon^{-1} \circ
    \theta_p^{-1} \circ \phi_p(\delta_\varepsilon u),
    \theta_p \circ \widehat \phi'(u)
  \big) = o(1).
\end{multline*}
From that we conclude that there is a limit
\[
  \lim_{\varepsilon \to 0}
  \delta_\varepsilon^{-1} \circ \theta_p^{-1} \circ
  \phi_p \circ \delta_\varepsilon
  = \widehat{\phi}'
\]
uniform in a neighborhood of origin and the conditions
of Corollary~\ref{Corollary:CC1} are fulfilled.
Isometry of quasimetrics is given by map
$\mathcal{L} =
\theta_p \circ \widehat{\phi}' \circ \phi_p^{-1}$:
$\widetilde d_\infty(x, y) =
\widehat d_\infty(\mathcal{L} x, \mathcal{L} y)$.

In the case of $C^m$-smooth Carnot--Carath\'{e}odory space
both coordinate systems $\theta_p$ ans $\phi_p$ are
$C^m$-smooth as well, therefore from 
Theorem~\ref{Th:HomoNecessary} and 
Corollary~\ref{Corollary:CC3} it follows that conditions
of Corollary~\ref{Corollary:CC2} are fulfilled and the mapping 
$\mathcal{L}$ defines the isomorphism of Lie algebras:
$\widetilde X_k(x) =
\mathcal{L}^{-1}_* \widehat X_k(\mathcal{L} x)$,
$k = 1, \ldots, N$.
This ends the proof.
\end{proof}

{\sc
\noindent
Sergey Basalaev, \\
Sobolev Institute of Mathematics, \\
630090, Novosibirsk, Koptyug av., 4. \\
Novosibirsk State University, \\
630090, Novosibirsk, Pirogova st., 2. \\
E-mail: sbasalaev@gmail.com
}


\begin{thebibliography}{9}

\bibitem{Rashevsky}
{P. K. Rashevski\v\i},
On the connectability of two arbitrary points of a totally 
nonholonomic space by an admissible curve~//
Uchen. Zap. Mosk. Ped. Inst. Ser. Fiz.-Mat. Nauk, 3.
1938. V.~2. P.~83--94.

\bibitem{Chow}
{W.\,L. Chow},
\"{U}ber systeme von linearen partiallen differentialgleichungen
erster ordnung~//
Mathematische Annalen.
1939.
V.~117.
P.~98--105.

\bibitem{RoSt76}
L.\,P. Rothschild, E.\,M. Stein,
Hypoelliptic differential operators and nilpotent groups~//
Acta Mathematica.
1976.
V.~137, N. 3--4.
P.~247--320.

\bibitem{Metivier}
G. Metivier,
Fonction Spectrale Et Valeurs Propres D'Une Classe 
D'Operateurs Non Elliptiques~//
Comm. Partial Differential Equations~1, 1976.
P.~467--519.

\bibitem{Bramanti}
M. Bramanti, L. Brandolini, M. Pedroni,
On the lifting and approximation theorem~//
Indiana Univ. Math. J. 2010.
V.~59, N~6.
P.~2093--2138.

\bibitem{NSW}
A. Nagel, E. M. Stein, S. Wainger,
Balls and metrics defined by vector fields.
I: Basic properties~//
Acta Math. 1985. V.~155, P.~103--147.

\bibitem{Gromov}
M. Gromov,
The Carnot--Carath\'eodory spaces seen from within~//
Progress in Mathematics, V. 144: Sub-Riemannian Geometry.
Birkh\"auser, 1996.
P.~85--324.

\bibitem{Vod07}
S.~K. Vodopyanov,
Geometry of Carnot--Carath\'eodory Spaces and 
Differentiability of Mappings~//
Contemporary Math. 2007. V.~424. P.~247--302.

\bibitem{KarmVod}
M. Karmanova, S. Vodopyanov,
Geometry of Carnot--Carath\'eodory spaces, differentiability
and coarea formula~//
Analysis and Mathematical Physics.
Trends in Mathematics.
Birkh\"auser, 2009.
P.~233--335.

\bibitem{Bell}
A. Bella\"{\i}che,
The tangent space in sub-Riemannian geometry~//
Progress in Mathematics, V. 144: Sub-Riemannian Geometry.
Birkh\"auser, 1996.
P.~4--84.

\bibitem{BasVod}
S. Basalaev, S. Vodopyanov,
Approximate differentiability of mappings of
Carnot--Carath\'eodory spaces~//
Eurasian Math. J. 2013.
V.~4, N~2.
P.~10--48.

\bibitem{Greshnov}
A.\,V. Greshnov,
Proof of Gromov's theorem on homogeneous nilpotent 
approximation for vector fields of class $C^1$~//
Siberian Adv. Math. 2013. V.~23, N.~3, P.~180--191.

\bibitem{KarmQuasi}
M.\,B. Karmanova,
Fine properties of basis vector fields on
Carnot--Carath\'eodory spaces under minimal assumptions
on smoothness~//
Siberian Math. J. 2014. V.~55, N.~1. P.~87--99.

\bibitem{Seliv}
S. V. Selivanova,
The tangent cone to a quasimetric space with dilations~//
Siberian Math. J., 2010. V.~51, N.~2, P.~313--324.

\bibitem{GromovMetric}
M. Gromov,
Groups of polynomial growth and expanding maps~//
Inst. Hautes Etudes Sci. Publ. Math. 1981.
V.~53, N.~18. P.~53--73.

\bibitem{KarmVodIndam}
M. Karmanova, S. Vodopyanov,
On Local Approximation Theorem on Equiregular
Carnot--Carath\'eodory Spaces~//
Geometric Control Theory and Sub-Riemannian Geometry.
Springer INdAM Ser. 2014.
V.~5.
P.~241--262.

\end{thebibliography}
\end{document}